\begin{document}

\newtheorem{prop}{Proposition}
\newtheorem{lemma}[prop]{Lemma}
\newtheorem{thm}[prop]{Theorem}
\newtheorem{defn}[prop]{Definition}
\newtheorem{cor}[prop]{Corollary} 
\newtheorem{rmk}[prop]{\bf Remark}
\newtheorem{rmks}[prop]{\bf Remarks}
\newtheorem{ex}[prop]{Example}
\newtheorem{exs}[prop]{Examples}

\newcommand{\tip}{\operatorname{tip}\nolimits}
\newcommand{\grb}{Gr\"obner }
\newcommand{\extto}{\xrightarrow}
\newcommand{\cA}{{\mathcal A}}
\newcommand{\cB}{{\mathcal B}}
\newcommand{\cD}{{\mathcal D}}
\newcommand{\cY}{{\mathcal Y}}
\newcommand{\cU}{{\mathcal U}}
\newcommand{\cJ}{{\mathcal J}}
\newcommand{\cI}{{\mathcal I}}
\newcommand{\cF}{{\mathcal F}}
\newcommand{\cX}{{\mathcal X}}
\newcommand{\cP}{{\mathcal P}}
\newcommand{\cQ}{{\mathcal Q}}
\newcommand{\cM}{{\mathcal M}}
\newcommand{\cK}{{\mathcal K}}
\newcommand{\cH}{{\mathcal H}}
\newcommand{\cT}{{\mathcal T}}
\newcommand{\cG}{{\mathcal G}}
\newcommand{\cE}{{\mathcal E}}
\newcommand{\cC}{{\mathcal C}}
\newcommand{\cO}{{\mathcal O}}
\newcommand{\dpp}{\prime\prime}
\newcommand{\br}{{\bf r}}
\newcommand{\bx}{$_{\fbox{}}$\vskip .2in}
\newcommand{\Gr}{\operatorname{Gr}\nolimits}
\newcommand{\gr}{\operatorname{gr}\nolimits}
\newcommand{\Mod}{\operatorname{\mathbf{Mod}}\nolimits}
\newcommand{\smod}{\operatorname{\mathbf{mod}}\nolimits}
\newcommand{\End}{\mbox{End}}
\newcommand{\Hom}{\mbox{Hom}}
\renewcommand{\Im}{\mbox{Im}}
\newcommand{\Ext}{\operatorname{Ext}\nolimits}
\newcommand{\pd}{\mbox{pd}}
\newcommand{\Tor}{\mbox{Tor}}
\renewcommand{\dim}{\mbox{dim}}
\newcommand{\gldim}{\mbox{gl.dim}}
\newcommand{\op}{^{\mbox{op}}}
\newcommand{\pr}{^{\prime}}
\newcommand{\f}{\operatorname{fin}}
\newcommand{\Sy}{\operatorname{syz}}
\newcommand{\semi}{\mathbin{\vcenter{\hbox{$\scriptscriptstyle|$}}
\;\!\!\!\times }}
\newcommand{\fralg}{K\!\!<\!\!x_1,\dots,x_n\!\!>}
\newcommand{\Efin}{\operatorname{Efin}\nolimits}
\newcommand{\Lin}{\operatorname{Lin}\nolimits}
\newcommand{\Tlin}{\operatorname{Tlin}\nolimits}
\newcommand{\WKS}{\operatorname{WKS}\nolimits}
\newcommand{\cx}{\operatorname{cx}\nolimits}
\newcommand{\pcx}{\operatorname{pcx}\nolimits}
\newcommand{\tpcx}{\operatorname{tpcx}\nolimits}
\newcommand{\mpcx}{\operatorname{mpcx}\nolimits}
\newcommand{\tmpcx}{\operatorname{tmpcx}\nolimits}
\newcommand{\mto}{\hookrightarrow}
\newcommand{\AR} {Auslander-Reiten }

\newcommand{\rank}{\operatorname{rank}\nolimits}
\newcommand{\e}{^{\text{e}}}
\newcommand{\od}{^{\text{o}}}
\newcommand{\dd}{(\delta\e,\delta\od)}
\newcommand{\refl}[1]{{\operatorname{\mathbf {Refl}}\nolimits}({#1},{\dd})}
\newcommand{\hrefl}[1]{{\operatorname{\mathbf {Refl}}\nolimits}({#1},\rho)}

\newcommand{\href}[2]{{\operatorname{\mathbf {Refl}}\nolimits}({#1},{#2})}
\newcommand{\shref}[2]{{\operatorname{\mathbf {refl}}\nolimits}({#1},{#2})}
\newcommand{\rep}[1]{\operatorname{\mathbf{Rep}}\nolimits(K,{#1})}
\newcommand{\srep}[1]{\operatorname{\mathbf{rep}}\nolimits(K,{#1})}

\newcommand{\bGamma}{\overline{\Gamma}}
\newcommand{\bGammap}{\overline{\Gamma'}}
\newcommand{\cring}{${\mathbf C}$-{\bf Ring}}
\newcommand{\AP}{\operatorname{AP}\nolimits}

\title
{$d$-Koszul algebras, $2$-$d$ determined algebras and $2$-$d$-Koszul algebras}
\author{Edward L. Green}\address{Department of Mathematics\\
Virginia Tech University\\ 
Blacksburg, VA 24061\\ USA}\email{green@math.vt.edu}
\author{E.\ N.\ Marcos}
\address{Departamento de Matem\'atica, Universidade de S\~ao
Paulo, IME-USP\\ Caixa Postal 66.281\\ S\~ao Paulo -- SP, 05315--970, Brasil}\email{\tt
enmarcos@ime.usp.br}
\thanks{The first author is partially supported by a grant from NSA, and 
the second author has a productivity grant from CNPq-Brasil and a Projeto Tem\'atico from FAPESP} 
\subjclass[2000]{Primary 16E65. Secondary 16E05, 16E40, 16G20}

 \keywords{Koszul, projective resolution, Gr\"obner bases} 

\begin{abstract} The relationship between an algebra and its associated monomial
algebra is investigated when at least one of the algebras is $d$-Koszul.  It is shown
that an algebra which has a reduced \grb basis that is composed of homogeneous  elements
of degree $d$ is $d$-Koszul
if and only if its associated monomial algebra is $d$-Koszul.  The class
of $2$-$d$-determined algebras and the class $2$-$d$-Koszul algebras are introduced.
In particular, it shown that $2$-$d$-determined monomial algebras are $2$-$d$-Koszul
algebras and the structure of the ideal of relations of such an algebra is completely
determined. 
\end{abstract}

\maketitle


\section{Introduction}\label{intro} 
This paper focuses on the study of classes of graded algebras such that
graded projective resolutions of the semisimple
part of such graded algebras have special properties. We also investigate the
relationship between the algebra having projective resolutions with certain special 
properties and the structure of the Ext-algebra of the semisimple part of such an
algebra.  In the past,
some of the strongest results have been obtained for Koszul algebras, a special class
of graded algebras that have occurred 
in many
diverse settings.  Generalizations of Koszul algebras, for example, $d$-Koszul
algebras \cite{b,gmmz}, have recently been studied.  
In this paper, we continue the investigation 
of $d$-Koszul algebras and begin a study of
a new classes of algebras which we call $2$-$d$-determined algebras and
$2$-$d$-Koszul algebras.   We begin by 
summarizing the major results of the paper.  Precise definitions for many of the
terms used can be found later in this section and the next section.

In the summary below, we let $\Lambda=K\Gamma/I$, where $K$ is a field, $\Gamma$ a finite
quiver, $K\Gamma$ the path algebra, and $I$ an ideal generated by length homogeneous elements.
Let $J$ be the ideal in $K\Gamma$ generated by the arrows of $\Gamma$ and assume that
$I\subset J^2$.  The length grading of $K\Gamma$ induces a positive $\mathbb Z$-grading of
$\Lambda=\Lambda_0\oplus \Lambda_1\oplus\cdots$, where $\Lambda_0$ is the $K$-space
spanned by the vertices of $\Gamma$.  In particular, $\Lambda_0\cong\Lambda/(J/I)$ and
$J/I\cong \Lambda_1\oplus\Lambda_2\oplus\cdots$ is graded Jacobson radical of $\Lambda$.

After the summary of results, this section ends with the introduction of notation, background, and
a brief overview of the theory of
\grb bases for path algebras.
In Section \ref{ags sec} we recall constructions of projective resolutions
found in \cite{ag, gs}, which we call the `AGS resolution', and also review
the general approach to the structure of projective resolutions
found in \cite{gz}.  Given $\Lambda=K\Gamma/I$, using the theory of \grb bases, we associate 
a monomial
algebra, $\Lambda_{mon}$,
to $\Lambda$, where by `monomial algebra', we mean a quotient of a path
algebra by an ideal that can be generated by a set of path. In this case, 
$\Lambda_{mon}=K\Gamma/I_{mon}$,
where $I_{mon}$ is the ideal generated by the `tips' or `leading terms' of $I$. 
One of the main objectives of the paper is the study of the interrelationship of
$\Lambda$ and $\Lambda_{mon}$.
In Section \ref{d koz}, we turn our attention to 
$d$-Koszul algebras, which were introduced by Berger \cite{b}.
Let $\mathbb N$ denote the natural numbers $\{0,1,2,\dots\}$ and let $d\in \mathbb N$
with $d\ge 2$.
Consider the function $\delta\colon\mathbb N\to \mathbb N$ 
defined by
 \[\delta(n)=\lbrace \begin{array}{l}
 \frac{n}{2}d\mbox{ if }n\mbox{ is even}\\ \\ \frac{n-1}{2}d +1\mbox{ if }n\mbox{ is odd}
 \end{array}. \]
We say that $\Lambda =K\Gamma/I$ is a {\it $d$-koszul algebra} if the $n^{th}$-projective module
in a minimal graded projective $\Lambda$-resolution of $\Lambda_0$ can be generated in degree $\delta(n).$
More generally, if $F\colon \mathbb N\to \mathbb N$, we say that $\Lambda$ is
{\it $F$-determined}, (respectively {\it weakly $F$-determined}) 
in case 
the $n^{th}$-projective module
in a minimal graded projective $\Lambda$-resolution of $\Lambda_0$ 
can be generated in degree $F(n)$, (resp. $\le F(n)$), for
all $n\in\mathbb N$.  The notion of $F$-determined algebras was introduced in \cite{gm} and
also investigated in \cite{gs2}.
Proposition \ref{f-det} shows that, in particular, if $\Lambda_{mon}$ is weakly $F$-determined,
then so is $\Lambda$.
We use this result to show that if $\Lambda_{mon}$ is a $d$-Koszul
algebra then so is $\Lambda$ in Corollary \ref{d-kosz-cor}. Theorem \ref{monomial} 
gives a partial converse, showing that if $\Lambda=K\Gamma/I$ and $I$ has a reduced
\grb basis concentrated in degree $d$, then $\Lambda_{mon}$ is a $d$-Koszul algebra.
Theorem \ref{summ} summarizes the main results of the section.

In Section \ref{2-d det}, we introduce the class of $2$-$d$-determined algebras.
We say that $\Lambda$
is {\it $2$-$d$-determined} if $I$ can be generated by homogeneous elements of
degrees $2$ and $d$, and $\Lambda$ is weakly $\delta$-determined.  We say a
$2$-$d$-determined algebra is {\it $2$-$d$-Koszul} if the Ext-algebra,
$\bigoplus_{n\ge 0}\Ext^n_{\Lambda}(\Lambda_0,\Lambda_0)$, can be
finitely generated.  In this section, we mainly consider the case where
$\Lambda$ is a monomial algebra.  Theorem \ref{mono-2d} proves that if
$\Lambda=K\Gamma/I$ and $I$ is generated by paths of lengths $2$ and $d$,
then $\Lambda$ is a $2$-$d$-determined algebra if and only if $K\Gamma/\langle \cG_d\rangle$
is a $d$-Koszul algebra, where $\cG_d$ denotes
the set of paths of length $d$ in a minimal generating set of $I$. 
In Theorem \ref{2d-mono-koz}, we show that a monomial algebra with generators
in degrees $2$ and $d$ is $2$-$d$-determined if and only if the Ext-algebra,
$\bigoplus_{n\ge 0}\Ext^n_{\Lambda}(\Lambda_0,\Lambda_0)$,
can be generated in degrees $0$, $1$, and $2$. Algebras, whose Ext-algebra can
be generated in degrees $0$, $1$, and $2$ have been called {\it K2 algebras} 
by Cassidy and Shelton \cite{cs}.

In the final section, Section \ref{2d-grobner}, we study $2$-$d$-determined algebras
in general.  Proposition \ref{gen-2d} shows that if $\Lambda_{mon}$ is $2$-$d$-Koszul
then $\Lambda$ is $2$-$d$-determined.  The next result is the main result of the section.

\vskip .2in
\noindent {\bf Theorem 18} {\em
Let $\Lambda = K\Gamma/I$, where $I$ is a homogeneous 
ideal in $K\Gamma$, and let $>$ be an admissible order on $\cB$.  Suppose that the reduced
Gr\"obner basis $\cG$  of $I$ with respect to $>$ satisfies $\cG =\cG_2 \cup \cG_d$
where $\cG_2$ consists of homogeneous elements of degree $2$ and
$\cG_d$ consists of homogeneous elements of degree $d$, where $d\ge 3$.
Then $\Lambda$ is $2$-$d$-determined
if $K\Gamma/\langle\tip( \cG_d)\rangle$ is a $d$-Koszul algebra.   
}
\vskip .2in
\noindent
Section \ref{2d-grobner} ends with some open questions.

We end this section with some definitions and notations that will be used throughout
the remainder of the paper.
We always let $\Gamma$ denote a finite quiver and $K\Gamma$ its path algebra
 over a fixed field $K$. The $K$-algebra $K\Gamma$ is naturally a positively
 $\mathbb Z$-graded algebra, where, if $n$ is a nonnegative integer, then $(K\Gamma)_n$ 
 denotes
 the homogeneous component of $K\Gamma$ which is the vector space with basis the set of
 paths of length $n$.  We denote the length of a path $p$ by $\ell(p)$ and
 let $\Gamma_n$ denote
 the set of directed paths of length $n$ in $\Gamma$; in particular, $\Gamma_0$ is 
 the set of vertices of $\Gamma$ and $\Gamma_1$ is the set of arrows in $\Gamma$.  
 We call this the {\it length grading}
 of $K\Gamma$ and say an element of $K\Gamma$ is {\it homogeneous} if all the paths 
 occurring in the element have the same length. 
 In particular,  if $f\in (K\Gamma)_n$, then we say that $f$ is {\it homogeneous of (length)
 degree $n$} and write $\ell(f)=n$.
 If $I$ is an ideal in $K\Gamma$, we say 
 that $I$ is a {\it homogeneous ideal} if $I$ can be generated by homogeneous elements.
 Clearly, if $I$ is a homogeneous ideal in $K\Gamma$, then $K\Gamma/I$ has an grading
 induced from the length grading of $K\Gamma$, and we call this the {\it length grading
 on $K\Gamma/I$ induced by the length grading on $K\Gamma$}, or simply, the
 {\it induced length grading on $K\Gamma/I$}.
 
 If an ideal $I$ can be generated by a set of paths in $\Gamma$, then we say that
 $I$ is a {\it monomial ideal} and that $\Lambda=K\Gamma/I$ is a {\it monomial
 algebra}.  Since every monomial ideal is a homogeneous ideal, every monomial
 algebra has an induced length grading.
 
As mentioned earlier, we  denote by $J$,  the ideal of $K\Gamma$ generated by the
 arrows of $\Gamma$.  By `module', we  mean `left module' unless
 otherwise stated.
 If  $\Lambda$ = $K\Gamma/I$, where $I$ is an ideal contained
 in $J$ and we denote by $\Lambda_0$, the semisimple $\Lambda$-module $\Lambda_0$ =
 $({\Lambda/I})/ (J/I)$.  Suppose further that
$I$ is a homogeneous ideal and $\Lambda=K\Gamma/I$ is
given the induced length grading. Note that $J/I$ is the graded Jacobson
radical of $\Lambda$.  The $\Lambda$-module $\Lambda_0$ 
will also be viewed as a graded $\Lambda$-module
whose support is concentrated in degree $0$.
If $S_1,\dots, S_n$ is a full set of nonisomorphic simple
$\Lambda$-modules, then $\Lambda_0\cong \oplus_{i=1}^nS_i$, as an (ungraded) $\Lambda$-module.
We also note that, in the category
of graded $\Lambda$-modules, $\Lambda_0$ has a {\it minimal graded projective resolution}
\[\cdots \to P^2\to P^1\to P^0\to \Lambda_0\to 0,\]
in the sense that each $P^n$ is a graded projective $\Lambda$-module, each
map $P^n\to P^{n-1}$ is a degree $0$ homomorphism, and, for each
$n\ge 1$, the image of $P^n$ in $P^{n-1}$ is contained in $(J/I)P^{n-1}$.

If $\Lambda=K\Gamma/I$, for some homogeneous ideal $I$ in $K\Gamma$, and $v\in \Gamma_0$, then
we view $\Lambda v$ as an indecomposable graded $\Lambda$-module generated in degree
$0$ by $v$.  If $M=\oplus_{i\in\mathbb Z}M_i$ is a graded $\Lambda$-module,
then we let $M[n]$ denote the $n^{th}$-shift of $M$; that is, 
$M[n]=\oplus_{i\in\mathbb Z}N_i$, where $N_i=M_{i+n}$.  It is well known, for example,
see \cite{g},
that every graded indecomposable
projective $\Lambda$-modules is isomorphic to $\Lambda v[n]$, for some unique $n\in \mathbb
Z$
and $v\in \Gamma_0$ and that every finitely generated
graded projective $\Lambda$-module
can be written as direct sum of projective modules of the form $\Lambda v[n]$.

 Given
 a set $X$ in $K\Gamma$, we denote by $\langle X\rangle$, the two sided ideal 
in $K\Gamma$ generated by $X$. 
We will freely use the terminology and results about \grb bases for
path algebras found in \cite{g2}.
For the reader's benefit, we recall some of the definitions.   We say a nonzero element $x\in K\Gamma$
is {\it uniform} if there exist vertices $v,w\in \Gamma_0$ such that
$vx=x=xw$.  Note that any nonzero element of $K\Gamma$ is a sum of uniform elements and
that any ideal in $K\Gamma$ can be generated by uniform elements.

Let $\cB=\cup_{n\ge 0}\Gamma_n$ be the
set of paths in $\Gamma$. We say a well ordering $>$ on $\cB$ is an
{\it admissible order} if the following conditions hold for all $p,q,r,s\in \cB$.
\begin{enumerate}
\item If $p>q$, then $rp>rq$, if both are nonzero.
\item If $p>q$, then $pr>qr$, if both are nonzero.
\item If $p=qrs$, then $p\ge r$.
\end{enumerate}
If $x\in K\Gamma$, then $x=\sum_{p\in\cB}\alpha_pp$, where $\alpha_p\in K$ and almost
every $\alpha_p=0$.  If $x\ne 0$, the {\it tip of $x$}, denoted $\tip(x)$, is the
path $p\in\cB$ such that $\alpha_p\ne 0$ and  $p\ge q$, for all $q$ such that $\alpha_q\ne 0$.
If $X\subset K\Gamma$, then $\tip(X)=\{\tip(x)\mid x\in X\setminus\{0\}\}$.  We say a 
path $p$ {\it occurs in $\sum_{q\in\cB}\alpha_qq\in K\Gamma$} if $\alpha_p\ne 0$.

Fix an admissible order $>$ on $\cB$ and let $I$ be an ideal in $K\Gamma$.  We say a
set $\cG$ of nonzero uniform elements in $I$ is a {\it \grb basis of $I$ (with respect to $>$)}
if $\langle \tip(\cG)\rangle=\langle \tip(I)\rangle$.  We say a \grb basis $\cG$ of $I$ 
is the {\it reduced \grb basis} if, for every $g\in \cG$, the coefficient of $\tip(g)$ is
$1$ and, if $p$ is a path occurring in 
$g$ and $p$ contains
a subpath $t$, where $t=\tip(g')$,
for some $g'\in\cG$, then $g=g'$.
We note that given $>$ and an ideal $I$ in $K\Gamma$, the reduced \grb basis of $I$ exists
and is unique.
Using the Buchberger algorithm as 
generalized for path algebras, one can see  
that if $I$ is a homogeneous ideal in $K\Gamma$, the reduced \grb
basis of $I$  consists of homogeneous uniform elements. 

Now suppose that $\cG$ is the reduced \grb basis of an ideal $I$ with respect
to $>$.  Let $I_{mon}$, the {\it associated monomial ideal to $I$}, be the ideal in $K\Gamma$ generated by the tips
of $\cG$.  If $\Lambda=K\Gamma/I$, we let $\Lambda_{mon}=K\Gamma/I_{mon}$. 
We call $\Lambda_{mon}$ the {\it associated monomial algebra of $\Lambda$} with
respect to $>$.  We note that if $I$ is a monomial ideal, then $I=I_{mon}$, and
this is independent of the choice of $>$; whereas, if $I$ is not a monomial ideal, then
$I_{mon}$ usually depends on the choice of admissible order.

\section{The AGS resolution}\label{ags sec}

Although the proofs of the results in this section appear in other papers, they are not
stated or combined together in fashion we need throughout the remainder of the
paper.  Hence we have included this survey for the readers benefit.

In both \cite{ag} and \cite{gs}, methods for constructing a
projective $\Lambda$-resolution of $\Lambda_0$ are given and we will call such
a constructed resolution the {\it AGS resolution}. The reader may check these constructed 
resolutions of $\Lambda_0$
are, in fact, the same; although in \cite{gs}, resolutions of a larger class of modules,
that includes $\Lambda_0$, are given.
Both methods employ an admissible order on $\cB$ and a \grb basis of $I$ (with respect to the
chosen admissible
order).   Furthermore, the reader may check
that, if $I$ can be generated by length homogeneous elements, 
then the AGS resolution is, in fact, a resolution
in the category of graded $\Lambda$-modules, see \cite{gsz}. 
In general, the AGS resolution is not
minimal, but, if the \grb basis is finite, the projective modules occurring in
the resolution, viewed as graded $\Lambda$-modules, 
can be written as finite direct sums of projective $\Lambda$-modules
of the form $\Lambda v[n]$, where $v$ is a vertex in $\Gamma$, and $n\in \mathbb Z$.  
Note that,  if $I$ has a \grb basis that consists only of paths (of
length at least 2), then the AGS resolution is minimal.   

In a path algebra $K\Gamma$, if $x\in K\Gamma$ is a nonzero element such that
$vx=x$, where $v\in \Gamma_0$, then we let $o(x)=v$.  Similarly, if $xv=x$, where
$v\in \Gamma_0$, then we let $t(x)=v$.

Let $>$ be an admissible order, $I$ a homogeneous ideal in $K\Gamma$, $\Lambda=K\Gamma/I$,
and $\cG$ be the reduced \grb basis for $I$. Suppose that  $\cG=\{g^2_i\}_{i\in \cI}$, for some
index set $\cI$.  If $v$ is a vertex, by abuse of notation, we will let
$\Lambda v$ also denote the graded projective $\Lambda$-module
generated by $v$ with $v$ in degree $0$.
Suppose that 
\[ \cdots \to Q^2\to Q^1\to Q^0\to \Lambda_0\to 0\]
is the AGS (graded) resolution of $\Lambda_0$.  Then
$Q^0=\oplus_{v\in \Gamma_0}\Lambda v, Q^1=\oplus_{a\in \Gamma_1}\Lambda o(a)[-1]$, and
$Q^2=\oplus_{i\in \cI}\Lambda o(g^2_i)[-\ell(g^2_i)]$.

We briefly describe the structure of
$Q^3$, leaving details to be found in \cite{gs}. For this, we need a few more
definitions.  If $p,q\in \cB$, we say
$p$ {\it overlaps} $q$ if there are paths $r$ and $s$ such that $pr=sq$ and $\ell(s)<\ell(p)$, and that
the overlap is {\it proper} if $\ell(r)\ge 1$ and $\ell(s)\ge 1$.  We say
$q$ is a {\it subpath} of $p$ if $p=rqs$ for some paths $r$ and $s$, and that
$q$ is a {\it proper subpath} of $p$, if $p=rqs$, for some paths $r$ and $s$, with
$\ell(r)\ge 1$ and $\ell(s)\ge 1$.  
As remarked earlier,
since $\cG$ is a reduced \grb basis, if
$g^2_i=\sum_{j=1}^m\alpha_jq_j$, where
each $\alpha_j$ is a nonzero element of $K$ and the $q_j$'s are distinct paths,
then if $s\ne i$, $\tip(g^2_s)$ is not a subpath of $q_j$, for $j=1,\dots, m$.

If $\rho$ is a set of paths of length at least 2, and $t,q\in \cB$, then 
we say a path $p$ is {\it the maximal overlap
of $t$ with $q$ with respect to $\rho$} if the following two conditions hold.
\begin{enumerate}
\item If $t$ overlaps $q$ such that there exist paths $s,s'\in\cB$ with  
$\ell(s')\ge 1$ with $p=s'q=ts$.
\item 
For all $t'\in \rho$, $t'$ is not a proper subpath of $p$.
\end{enumerate}
In this case, we say that $p$ {\it maximally overlaps} $q$ with respect to $\rho$.

We will be interested in maximal overlaps of elements of $\tip(\cG)$ with
various paths with respect $\tip(\cG)$.  In particular,
let \vskip .2in
\noindent
$T^3=\{p\in \cB\mid p \mbox{ is the maximal overlap of } t' \mbox{ with } 
t \mbox{ with respect to }\tip(\cG), \mbox{ where }$ \linebreak
\phantom{xxxxxxxxxxxxxxxxxxxxxxx}$t,t'\in\tip(\cG)\}.$
\vskip .2in
Note that if $\cG$ is a finite set, then $T^3$ is also a finite set. 
In general, we have the following.

\begin{prop}\label{Q3}{\mbox{\cite{gs}}} Let $\Lambda=K\Gamma/I$ where $I$ is a homogeneous ideal
in $K\Gamma$ and suppose that $>$ is an admissible order on $\cB$.  Let
\[\cdots \to Q^2\to Q^1\to Q^0\to \Lambda_0\to 0\]
be the AGS (graded) resolution of $\Lambda_0$.  Then, as graded
$\Lambda$-modules, 
\[Q^3=\oplus_{t\in T^3}\Lambda o(t)[-\ell(t)],\]
where $T^3=\{p\in \cB\mid p \mbox{ is the maximal overlap of } t' \mbox{ and } 
t, \mbox{ for some }t,t'\in\tip(\cG)\}$.
\end{prop}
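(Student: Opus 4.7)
The plan is to follow the general AGS construction, in which the generators of each $Q^n$ are indexed by a combinatorial set $T^n$ of paths encoding ``$n$-fold overlaps'' among tips. From the description just before the proposition one already has $T^0=\Gamma_0$, $T^1=\Gamma_1$, and $T^2=\tip(\cG)$, so the only thing to verify at this stage is that the correct set $T^3$ is the one named in the statement, and that the grading shift works out.

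First, I would make the differential $d_2\colon Q^2\to Q^1$ explicit. For each $g^2_i\in\cG$, write $\tip(g^2_i)=a\cdot p$ with $a\in\Gamma_1$; the generator of the summand $\Lambda o(g^2_i)[-\ell(g^2_i)]$ maps into $Q^1$ to $a\otimes p$ plus correction terms coming from the non-tip paths in $g^2_i$. Since $\cG$ is reduced, the syzygies of the image of $d_2$ are controlled by the overlap ambiguities of $\tip(\cG)$: whenever two tips $t,t'\in\tip(\cG)$ overlap to form a path $p = s'q = ts$ (with $q=t'$, $\ell(s')\ge 1$), one obtains a canonical element of $\ker d_2$, which lives in degree $\ell(p)$ and has source vertex $o(p)$, and hence should contribute a free summand $\Lambda o(p)[-\ell(p)]$ to $Q^3$.

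Second, I would argue that one must restrict to the \emph{maximal} overlaps in order to avoid redundancy: if some third tip $t''\in\tip(\cG)$ is a proper subpath of $p$, then the overlap relation indexed by $p$ is already produced, up to the action of $\Lambda$, by overlap relations involving $t''$ at a smaller length. Conversely, if $p$ satisfies condition~(2) in the definition of maximal overlap (no $t''\in\tip(\cG)$ is a proper subpath), the corresponding overlap relation is a genuinely new generator. This is the content of the inductive step of the AGS construction in \cite{gs}, and matches the bijection between $T^3$ and a free generating set of $Q^3$. The grading shift $[-\ell(p)]$ then follows because the generator of the summand indexed by $p$ sits in internal degree $\ell(p)$.

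The main obstacle is the bookkeeping behind the maximality clause: one has to check that throwing away non-maximal overlaps really does produce a generating set of $\ker d_2$ as a $\Lambda$-module, and not merely a subset of it. I would handle this by appealing directly to the construction in \cite{gs}, where this is established by an Anick-style induction on path length, together with the fact that a reduced \grb basis has the property that no $\tip(g^2_s)$ is a subpath of any non-tip path occurring in a different $g^2_i$. Everything else---uniformity, source vertex, the shift by $\ell(p)$---then follows from reading off lengths and idempotents in the definition of $p=s'q=ts$.
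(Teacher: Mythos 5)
The paper gives no proof of Proposition~\ref{Q3}: it is quoted directly from \cite{gs}, as the citation in its statement indicates, so there is nothing internal to compare your argument against. Your sketch is consistent with the construction in that reference --- overlaps of tips yield canonical elements of the kernel at $Q^2$, the maximality condition discards redundant generators, and the shift $[-\ell(p)]$ records the internal degree of the new generator --- and you correctly isolate the one substantive step (that the maximal overlaps already \emph{generate} the kernel as a $\Lambda$-module), which you, like the paper, resolve by appealing to \cite{gs}.
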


The AGS resolution is a special case of projective $\Lambda$-resolutions of
modules which are studied in \cite{gsz}.  We recall some definitions and results
from that paper, since the perspective and notation 
developed there will be
used in some of the proofs that follow. For ease of notation, we will sometimes
denote $K\Gamma$ by $R$.

Let $M$ be a  $K\Gamma$-module and $m$ be a nonzero element of $M$. We say that
$m$ is \emph{left uniform} if there exist $u$ in $\Gamma_0$ such that $m=um$.
In this case, we let $o(m)=u$.
Note that if $\Gamma$ has a single vertex
then every nonzero of $M$ is left uniform.

Suppose that $M$ is a finitely generated $\Lambda$-module.
Then, as shown in \cite{gsz}, there exist $t_n$ and $u_n$ in
$\{0,1,2,\dots\}\cup \infty$ with $u_0=0$, $\{f^n_i\}_{i\in
T_n=[1,\dots,t_n]}$, and $\{{f^n_i}'\}_{i\in U_n=[1,\dots,u_n]}$ such
that
\begin{enumerate}
\item[(i)] Each $f^0_i$ is a left uniform element of $R$, for all
$i\in T_0$.
\item[(ii)] Each $f_i^n$ is in $\oplus_{j\in
T_{n-1}}Rf^{n-1}_j$ and is a left uniform element, for all $i\in T_n$
  and all $n\ge 1$. 
\item[(iii)] Each ${f_i^n}'$ is in $\oplus_{j\in T_{n-1}}If^{n-1}_j$ and
is a left uniform element for all $i\in U_n$ and all $n\ge 1$.
\item[(iv)] For each $n\ge 2$, \[(\oplus_{i\in T_{n-1}}Rf^{n-1}_i)
\cap (\oplus_{i\in T_{n-2}}I{f^{n-2}_i}) =(\oplus_{i\in T_n}R f^n_i)
\oplus(\oplus_{i\in U_n}R{f^n_i}').\]
\end{enumerate}

An explicit description of the tip set of $T_n$ for the AGS resolution of $\Lambda_0$, is given 
in Proposition \ref{apn} below.
The next result explains how the sets $\{f^n_i\}_{i\in T_n}$ and
$\{{f^n_i}'\}_{i\in U_n}$ give rise to a
projective $\Lambda$-resolution of $M$. 
We have the following isomorphisms:
\[\oplus_{i=1}^m Rf_i/\oplus_{i=1}^m If_i\cong\oplus_{i=1}^m (Rf_i/If_i)
\cong\oplus_{i=1}^m \Lambda o(f_i).\]

\begin{thm}[\cite{gsz}]\label{thm:fnresol} 
Let $M$ be a finitely generated $\Lambda$-module and suppose
that, for $n \ge 0$, $t_n$ and $u_n$ are in $\{0,1,2,\dots\}\cup
\infty$,  $\{f^n_i\}_{i\in T_n=[1,\dots,t_n]}$, and 
$\{{f^n_i}'\}_{i\in U_n=[1,\dots,u_n]}$ are chosen satisfying
\emph{(i)-(iv)} above.  
Let 
\[L^n=\bigoplus_{i\in
T_n}\Lambda o(f^n_i).\]
Then there exist maps $e^n\colon L^n\to L^{n-1}$ and a surjection $L^0\to M$ such that 
\[\cdots \extto{e^{n+1}}L^n\extto{e^n}L^{n-1}\extto{e^{n-1}}\cdots \extto{e^1}
L^0\extto{} M\extto{} 0\]
is a projective $\Lambda$-resolution of $M$.
\end{thm}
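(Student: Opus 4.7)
The plan is to construct each $e^n$ from the specific expression of $f^n_i$ in the direct sum $\oplus_{j\in T_{n-1}} Rf^{n-1}_j$. By (ii) and left uniformity, write $f^n_i=\sum_j r^n_{ij}\,f^{n-1}_j$ with $r^n_{ij}=o(f^n_i)\,r^n_{ij}\,o(f^{n-1}_j)$; these coefficients are uniquely determined because, for any left uniform $x\in K\Gamma$ with $o(x)=v$, the annihilator $\operatorname{ann}_R(x)\cap Rv$ is zero. (The top length homogeneous part of $rx$ is the product of the top length parts of $r$ and $x$, which is nonzero since $t(q)=o(p)=v$ for every relevant pair $(q,p)$ and distinct such pairs give distinct basis paths.) Letting $\epsilon^n_i$ denote the generator of the $i$-th summand of $L^n$, I define $e^n(\epsilon^n_i)=\sum_j \bar r^n_{ij}\,\epsilon^{n-1}_j$, where bars denote images in $\Lambda$, and extend $\Lambda$-linearly; the surjection $L^0\to M$ is induced by sending $\epsilon^0_i$ to the generator of $M$ that $f^0_i$ represents in the construction of \cite{gsz}.

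To check $e^n\circ e^{n+1}=0$, I compute
\[e^n e^{n+1}(\epsilon^{n+1}_i)=\sum_{j,k}\bar r^{n+1}_{ij}\bar r^n_{jk}\,\epsilon^{n-1}_k,\]
and show each coefficient sum $\sum_j r^{n+1}_{ij}r^n_{jk}$ lies in $I$. Viewing $f^{n+1}_i\in\oplus_j Rf^n_j$ inside $\oplus_k Rf^{n-1}_k$, its $k$-th component equals $\bigl(\sum_j r^{n+1}_{ij}r^n_{jk}\bigr)f^{n-1}_k$. Condition (iv) at index $n+1$ gives $\oplus Rf^{n+1}_l\subseteq \oplus Rf^n_j\cap \oplus If^{n-1}_k$, so this $k$-component lies in $If^{n-1}_k$; the trivial annihilator property then lifts this to $\sum_j r^{n+1}_{ij}r^n_{jk}\in I$, as required.

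Exactness at $L^{n-1}$ ($n\ge 2$) is the heart of the argument. Given $x=\sum_i \bar r_i\,\epsilon^{n-1}_i\in\ker(e^{n-1})$, reading $e^{n-1}(x)=0$ coordinate-wise in $L^{n-2}$ forces $\sum_i r_i r^{n-1}_{ik}\in I$ for every $k$, hence $\sum_i r_i f^{n-1}_i\in \oplus_k If^{n-2}_k$. Combined with its a priori membership in $\oplus_i Rf^{n-1}_i$, condition (iv) decomposes the element as $\sum_l s_l f^n_l+\sum_m t_m {f^n_m}'$. Since each ${f^n_m}'\in \oplus If^{n-1}_i$ by (iii), the second sum vanishes modulo $\oplus If^{n-1}_i$, leaving $x=e^n(\sum_l \bar s_l\,\epsilon^n_l)\in\operatorname{im}(e^n)$. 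Exactness at $L^0$ is handled by the analogous property, built into the construction of \cite{gsz}, that $\{f^1_i\}\cup\{{f^1_i}'\}$ generates $\ker(\oplus Rf^0_i\to M)$.

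The main technical obstacle is shuttling information between the path algebra $R$ and the quotient $\Lambda$ through the nested direct-sum structures, in particular turning relations modulo $I$ on coefficients into membership statements for whole elements in $\oplus If^{n-2}_k$. The trivial annihilator property for left uniform elements in a path algebra is the small but essential fact that validates every such lift, while condition (iv) is precisely calibrated to identify the needed kernel at each stage.
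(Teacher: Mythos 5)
This theorem is quoted from \cite{gsz} and the paper supplies no proof of its own, so there is nothing internal to compare against; your argument is, however, essentially the proof given in that reference. Defining $e^n$ by the coefficients of $f^n_i$ in $\bigoplus_j Rf^{n-1}_j$, getting $e^ne^{n+1}=0$ from the containment $\bigoplus_l Rf^{n+1}_l\subseteq \bigoplus_k If^{n-1}_k$ in (iv), and getting exactness by lifting a kernel element to $(\bigoplus_i Rf^{n-1}_i)\cap(\bigoplus_k If^{n-2}_k)$ and discarding the ${f^n_m}'$ part modulo $\bigoplus_i If^{n-1}_i$ is exactly the mechanism of \cite{gsz}, and your observation that a nonzero left uniform element of $K\Gamma$ has trivial left annihilator in $Ro(x)$ (via top length-homogeneous components) is the correct justification for translating coefficient congruences modulo $I$ into membership of whole elements in $\bigoplus If^{n-2}_k$, and conversely.

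The one point you cannot actually derive from the hypotheses as stated is the bottom of the complex: conditions (i)--(iv) as quoted in this paper say nothing that makes $L^0\to M$ well defined and surjective, nor anything forcing $\ker(L^0\to M)=\operatorname{im}(e^1)$ (condition (iv) only begins at $n\ge 2$). You are right to push this back onto the normalization of $\{f^0_i\}$ and $\{f^1_i\},\{{f^1_i}'\}$ in \cite{gsz}; just be aware that this is an omission in the paper's abbreviated list of conditions rather than something your argument, or any argument, could recover from (i)--(iv) alone. With that caveat acknowledged, the proposal is correct and takes the same route as the source.
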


Although we do not use explicit descriptions of the maps $e^n$ in this paper,
we note that such descriptions can be found in \cite{gsz}. 
The AGS resolution is obtained  by constructing  particular $f^n_i$'s which satisfy
{(i)-(iv)}.  By Proposition \ref{Q3}, we see that, for this choice
of the $f^n_i$'s, $\{\tip(f^3_i)\}_{i\in T_3}$
is precisely the set of maximal overlaps $T^3$ defined ealier.
From this observation, we have the following useful result.

\begin{prop}\label{overlp} Let $>$ be an admissible order, 
$I$ a homogeneous ideal in $K\Gamma$, $\Lambda=K\Gamma/I$,
and $\cG$ be the reduced \grb basis for $I$.
Suppose that 
\[\cdots \to P^2\to P^1\to P^0\to \Lambda_0\to 0\] is a minimal graded projective 
$\Lambda$-resolution of $\Lambda_0$.
Then $P^3$ is isomorphic to $\bigoplus_{t\in (T^*)^3}\Lambda o(t)[-t]$, for some
subset $(T^*)^3$ of the set of maximal overlaps of $\cG$ with respect to $\cG$.
\end{prop}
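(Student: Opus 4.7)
The plan is to compare the minimal graded projective resolution $P^\bullet$ against the (graded) AGS resolution $Q^\bullet$ of $\Lambda_0$, whose third term is explicitly described in Proposition~\ref{Q3} as $Q^3 = \bigoplus_{t \in T^3} \Lambda o(t)[-\ell(t)]$. The standard comparison theorem for projective resolutions, carried out in the category of graded $\Lambda$-modules, produces a degree-zero chain map $\varphi\colon P^\bullet \to Q^\bullet$ lifting the identity on $\Lambda_0$. Minimality of $P^\bullet$ says that each differential lands in $(J/I)P^{n-1}$; a routine inductive argument then shows that $\varphi$ is a split monomorphism in every degree, the splittings $Q^n \to P^n$ being constructed by lifting along the minimality condition modulo $(J/I)$. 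Consequently, $P^3$ is isomorphic to a graded direct summand of $Q^3 = \bigoplus_{t \in T^3} \Lambda o(t)[-\ell(t)]$.

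To extract the desired form of $P^3$, I would invoke Krull--Schmidt for graded projective $\Lambda$-modules: as recalled in the introduction, every such module decomposes essentially uniquely into indecomposable summands of the shape $\Lambda v[n]$ with $v \in \Gamma_0$ and $n \in \mathbb{Z}$. The indecomposable summands of $P^3$ must therefore each be isomorphic to some $\Lambda o(t)[-\ell(t)]$ appearing in $Q^3$, and collecting the indices that actually occur yields a subset $(T^*)^3 \subseteq T^3$ with
\[
P^3 \;\cong\; \bigoplus_{t \in (T^*)^3} \Lambda o(t)[-\ell(t)],
\]
as asserted.

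The point demanding the most care, rather than a genuine obstacle, is that when $\cG$ and hence $T^3$ are infinite the direct sum is infinite and the finitely generated form of Krull--Schmidt does not literally apply. In that case one either appeals to Azumaya's extension of Krull--Schmidt, which applies here because each summand $\Lambda o(t)[-\ell(t)]$ is indecomposable with local graded endomorphism ring, or one detects the multiplicity of each isomorphism type $\Lambda v[n]$ in both $P^3$ and $Q^3$ vertex-by-vertex and degree-by-degree inside the graded semisimple quotients $P^3/(J/I)P^3 \hookrightarrow Q^3/(J/I)Q^3$; either route pins down $(T^*)^3$ uniquely as a subset of $T^3$.
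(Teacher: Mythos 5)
Your argument is correct, and it arrives at the same skeleton as the paper---reduce to the AGS term $Q^3=\bigoplus_{t\in T^3}\Lambda o(t)[-\ell(t)]$ via Proposition~\ref{Q3}, then show $P^3$ is a direct summand of $Q^3$ and sort out indecomposables---but you fill in the second step differently. The paper disposes of it in one line by citing Theorems 2.3 and 2.4 of \cite{gsz}, which \emph{constructively} carve a minimal resolution out of the AGS data by selecting subsets $V_n\subseteq T_n$ of the generators $f^n_i$ (this is the mechanism used again later, e.g.\ in the proof of Theorem~\ref{monomial}); the subset $(T^*)^3$ is then literally $\{\tip(f^3_i)\}_{i\in V_3}$. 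You instead give the generic homological argument: a comparison map between the two resolutions of $\Lambda_0$, minimality of $P^\bullet$ forcing it to split degreewise (more cleanly: a lift $\psi\colon Q^\bullet\to P^\bullet$ of the identity composed with $\varphi$ is an automorphism of the minimal resolution by the graded Nakayama lemma), and then Krull--Schmidt/Azumaya for graded projectives of the form $\Lambda v[n]$, whose degree-zero endomorphism rings are local, to match summands. Your route is self-contained and does not depend on the internal construction of \cite{gsz}, at the price of invoking Azumaya's theorem when $\cG$ is infinite; the paper's route buys the explicit identification of $(T^*)^3$ inside $T^3$, which it needs later. One cosmetic point: the statement's $\Lambda o(t)[-t]$ is a typo for $\Lambda o(t)[-\ell(t)]$, which you correctly use.
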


\begin{proof} 
Let $\cdots \to Q^2\to Q^1\to Q^0\to\Lambda_0\to 0$ be the (graded) AGS resolution
of $\Lambda_0$.  By Proposition \ref{Q3}, $Q^3=\bigoplus_{t\in T^3}\Lambda o(t)[-t]$, where
$T^3$ is the set of maximal overlaps of $\cG$ with respect to $\cG$.  The result now follows
from \cite[Theorem 2.4]{gsz} after by applying the proof of  \cite[Theorem 2.3]{gsz}.
\end{proof}

In a similar fashion, the following more general result is a consequence of the proof of Theorem 2.3 and
Theorem 2.4 in \cite{gsz}.

\begin{prop}\label{genl-overlp} Let $>$ be an admissible order, 
$I$ a homogeneous ideal in $K\Gamma$, and $\Lambda=K\Gamma/I$.
Suppose that 
\[\cdots \to P^2\to P^1\to P^0\to \Lambda_0\to 0\] is a minimal graded projective 
$\Lambda$-resolution of $\Lambda_0$ and that
\[\cdots \to Q^2\to Q^1\to Q^0\to\Lambda_0\to 0\] 
is the (graded) AGS resolution
of $\Lambda_0$.  If $f\in K\Gamma$ is a homogeneous element of degree $d$, we
set $\ell(f)=d$
If $Q^n\cong \bigoplus_{f^n_j\in T_n}\Lambda o(f^n_j)[-\ell(f^n_j)]$, then
$P^n$ is isomorphic to $\bigoplus_{f^n_j\in (T_n)^*}\Lambda o(f^n_j)[-\ell(f^n_j)]$, 
for some
subset $(T_n)^*$ of $T_n$.
\end{prop}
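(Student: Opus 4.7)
The plan is to mirror the argument given for Proposition \ref{overlp} and extend it from $n=3$ to all $n \ge 0$ by induction. The guiding principle, established in \cite[Theorems 2.3 and 2.4]{gsz}, is that any sequence of left uniform elements $\{f^n_i\}_{i \in T_n}$, $\{{f^n_i}'\}_{i \in U_n}$ satisfying conditions (i)--(iv) yields a projective $\Lambda$-resolution of the shape described in Theorem \ref{thm:fnresol}, and the minimal graded projective resolution is obtained from any such resolution by selecting, at each homological degree, a subfamily of the generators that minimally generates the corresponding syzygy.

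Concretely, I would induct on $n$. At $n=0$, both resolutions satisfy $Q^0 = P^0 = \bigoplus_{v \in \Gamma_0}\Lambda v$, so one may take $(T_0)^*=T_0$. For the inductive step, assume
\[P^{n-1} \cong \bigoplus_{f^{n-1}_j \in (T_{n-1})^*}\Lambda o(f^{n-1}_j)[-\ell(f^{n-1}_j)]\]
has been realized as a direct summand of $Q^{n-1}$ via the indicated subset. The $n$-th syzygy $\Omega^n$ of $\Lambda_0$ in $P^{n-1}$ is a graded submodule, and applying the proof of \cite[Theorem 2.4]{gsz} to this $\Omega^n$ produces a subfamily $(T_n)^*\subseteq T_n$ of the AGS generators $\{f^n_i\}_{i \in T_n}$ whose images minimally generate $\Omega^n$. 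Since $I$ is a homogeneous ideal, the entire AGS construction takes place in the category of graded $\Lambda$-modules with degree-zero maps, so each retained summand $\Lambda o(f^n_j)$ carries the same grading shift $[-\ell(f^n_j)]$ that it had in $Q^n$. This gives the claimed form of $P^n$.

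The principal obstacle is verifying that the minimal generating family for $\Omega^n$ can genuinely be taken to be a subset of $\{f^n_i\}_{i \in T_n}$, rather than merely some set of $\Lambda$-linear combinations of them; this is precisely the content of the construction in the proof of \cite[Theorem 2.4]{gsz}, where left uniformity and the admissible order make it possible to prune redundant generators one at a time without leaving the combinatorial indexing family. Once this cited theorem is invoked, the preservation of grading shifts is automatic from degree-zero-ness of the maps, and the induction closes.
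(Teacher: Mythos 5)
Your proposal is correct and takes essentially the same route as the paper: the paper simply records this proposition as ``a consequence of the proof of Theorem 2.3 and Theorem 2.4 in \cite{gsz},'' exactly as in the proof of Proposition \ref{overlp}, and your induction merely spells out the pruning of the AGS generators that those cited results already provide. The one step you flag as the principal obstacle --- that the minimal generators of each syzygy can be chosen from among the $f^n_i$ themselves --- is indeed the content of \cite[Theorem 2.4]{gsz}, so nothing further is needed.
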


We introduce some notation that will be needed later in the paper. Let $\rho$ be a set of paths
of length at least $2$ such that no path in $\rho$ is a subpath of any other path in $\rho$.
We define the sets $\AP(n)$ of {\it admissible paths of order $n$, with respect to $\rho$}.
First let $\AP(0)=\Gamma_0$, $\AP(1)=\Gamma_1$, and $\AP(2)=\rho$. Next, we let
$\AP(3)$ be the set of all
maximal overlaps of elements of $\rho$ with elements of $\rho$, with respect to
$\rho$.  Assuming $\AP(n-2)$ and
$\AP(n-1)$ have been defined.  Then define 
$\AP(n)$ to be the set of paths $a_n$ in $\Gamma$ which satisfy the following
conditions.
\begin{enumerate}
\item[(A1)] There are paths $a_{n-1}\in \AP(n-1)$ and $r\in\cB$ such that $\ell(r)\ge 1$ and
$a_n=ra_{n-1}$.
\item[(A2)] If $a_{n-1}=sa_{n-2}$ with $a_{n-2}\in \AP(n-2)$, then 
$rs=a_2s'$ for some $s'\in\cB$, some $a_2\in \rho$, and we have that $\ell(r)<\ell(a_2)$.
\item[(A3)] $a_2s'$ does not contain any element of $\rho$ as a proper 
subpath.
\end{enumerate}
The reader may check that for $n\ge 2$, given $a_n\in \AP(n)$, then there exist
unique paths $r\in\cB$ and $a_{n-1}\in\AP(n-1)$ such that $a_n=ra_{n-1}$.  Again, it is not hard to show 
that, in the above notation the overlap of $a_2$ with
$s$ is maximal with respect to $\rho$. 
For $n\ge 2$, we associate to a path $a_n\in \AP(n)$, its {\it admissible
sequence (with respect to $\rho$}), which is defined to be $(p_{n-1},p_{n-2},\dots, p_1)$, where 
\begin{enumerate}
\item Each $p_i\in\rho=\AP(2)$,
\item $a_n=p_{n-1}s_{n-1}a_{n-2}$, for some $s\in\cB$ and $a_{n-2}\in\AP(n-2)$,
\item $a_n=ra_{n-1}$, for some $r\in \cB$ and $a_{n-1}\in \AP(n-1)$, and, if $n\ge 3$, then
\item $(p_{n-2},\dots,p_1)$ is the admissible sequence for $a_{n-1}$ with respect to $\rho$.
\end{enumerate}

\begin{prop}\label{apn}{\rm \cite{gs,ghz}} Let $\Lambda=K\Gamma/I$, for some
homogeneous ideal of $K\Gamma$.  Fix some admissible order $>$ on $\cB$, and
let $\cG$ denote the reduced \grb basis for $I$ with respect to $>$.  For
$n\ge 0$, let $\{f^n_i\}_{i\in T_n}$ denote the elements defined in \cite{gs}
in the construction of the AGS $\Lambda$-resolution
of $\Lambda_0$. Let $\AP(n)$ be the set of admissible paths of order $n$ with respect
to $\tip(\cG)$.  Then, for $n\ge 0$, 
\[\{\tip(f^n_i)\}_{i\in T_n}=AP(n).\]
\end{prop}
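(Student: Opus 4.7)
The plan is to argue by induction on $n$, with the inductive engine being the recursive construction of the $f^n_i$ in \cite{gs} together with the recursive nature of the definition of $\AP(n)$.

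First I would dispose of the base cases. For $n=0$, the construction gives $f^0_v = v$ for each vertex, whose tip is $v$ itself, matching $\AP(0)=\Gamma_0$. For $n=1$, $f^1_a = a$ for each arrow $a\in \Gamma_1$, giving tips $\Gamma_1=\AP(1)$. For $n=2$, the $f^2_i$ are precisely the elements of the reduced Gröbner basis $\cG$, so their tips form $\tip(\cG)=\rho=\AP(2)$. The case $n=3$ is already contained in Proposition \ref{Q3}, which identifies the tip set with the maximal overlaps of $\tip(\cG)$ with itself relative to $\tip(\cG)$, i.e.\ $\AP(3)$.

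For the inductive step, I would invoke the recursive description of the $f^n_i$ given in \cite{gs}: each $f^n_i$ is built from a pair consisting of some $g\in\cG$ and some $f^{n-1}_j$ whose tip we know (by induction) to be an element $a_{n-1}\in\AP(n-1)$. Writing $a_{n-1}=sa_{n-2}$ as in the admissible-sequence decomposition of $a_{n-1}$, the construction produces $f^n_i$ only when $\tip(g)=a_2\in\rho$ overlaps $s$ in such a way that $a_2 s'=rs$ with $\ell(r)\ge 1$, $\ell(r)<\ell(a_2)$, and $a_2s'$ contains no element of $\rho$ as a proper subpath; one then shows, using that the order is admissible and multiplication preserves order, that $\tip(f^n_i)=r\cdot\tip(f^{n-1}_j)=ra_{n-1}$. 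The three conditions on the overlap are exactly (A1), (A2), (A3), so $\tip(f^n_i)\in\AP(n)$. Conversely, given any $a_n\in\AP(n)$ with decomposition $a_n=ra_{n-1}$ and associated data $(a_2,s')$, the inductive hypothesis supplies an $f^{n-1}_j$ with $\tip(f^{n-1}_j)=a_{n-1}$ and an element $g\in\cG$ with $\tip(g)=a_2$; the construction of \cite{gs} applied to this pair produces an $f^n_i$ with tip $ra_{n-1}=a_n$, establishing the reverse inclusion.

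The main obstacle will be the bookkeeping in the inductive step: verifying that the recursive choice of $f^n_i$ in \cite{gs} matches conditions (A1)--(A3) exactly, and that the conditions on the reduced Gröbner basis (no tip of a distinct element divides any path appearing in another) are used correctly to guarantee that the overlap witnessing (A3) is the one produced by the construction. Once this dictionary between the \cite{gs}-construction and the admissible-path language is fixed, both inclusions are essentially forced, and the result follows by induction.
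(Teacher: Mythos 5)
You should first note that the paper does not actually prove this proposition: it is stated with the citation \cite{gs,ghz} and used as imported background, so there is no in-paper argument to compare yours against. Judged on its own terms, your strategy --- induction on $n$, matching the recursive construction of the $f^n_i$ from \cite{gs} against the recursive conditions (A1)--(A3) defining $\AP(n)$ --- is the right one and is essentially how the identification is established in the cited sources (the sets $\AP(n)$ are the ``associated sequences of paths''/$n$-chains of Green--Happel--Zacharia and Anick--Green). Your base cases $n=0,1,2$ are correct, and the appeal to Proposition~\ref{Q3} for $n=3$ is consistent with the paper.

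The genuine gap is that the decisive step is asserted rather than argued. You write that ``one then shows, using that the order is admissible and multiplication preserves order, that $\tip(f^n_i)=r\cdot\tip(f^{n-1}_j)=ra_{n-1}$,'' and you describe the remaining work as ``bookkeeping.'' But this equality is the entire content of the proposition. The element $f^n_i$ is produced inside $\bigoplus_j Rf^{n-1}_j\cap\bigoplus_j If^{n-2}_j$ by a division/normal-form process: one starts from the overlap relation, subtracts off multiples of the $f^{n-1}_j$ and of elements of $I f^{n-2}_j$, and must check (i) that the candidate tip $ra_{n-1}$ is not cancelled in this process, and (ii) that no larger path survives to become the tip. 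Point (i) is exactly where the hypothesis that $\cG$ is the \emph{reduced} \grb basis enters (no tip of one element of $\cG$ divides a path occurring in another), and point (ii) is where condition (A3) --- that $a_2s'$ contains no element of $\rho$ as a proper subpath, i.e.\ maximality of the overlap --- is used to rule out a smaller admissible path sitting inside $ra_{n-1}$. As written, your inductive step assumes the construction ``produces $f^n_i$ only when'' (A1)--(A3) hold, which presupposes the dictionary you are trying to establish. To make this a proof you would need to open up the construction of \cite{gs} (or of \cite{gsz}, conditions (i)--(iv) in Section~\ref{ags sec}) and carry out the tip computation explicitly in both directions; otherwise the argument is a restatement of the claim in inductive form.
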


Finally, the following result relates the AGS $\Lambda$-resolution of $\Lambda_0$ to
a minimal projective $\Lambda_{mon}$-resolutions of $(\Lambda_{mon})_0$.
The proof follows from \cite{ghz}.

\begin{prop}\label{gr-hp-z} Let $\Lambda=K\Gamma/I$, where 
$I$ is a homogeneous ideal in $K\Gamma$ generated by elements of homogeneous length
at least $2$. Fix an admissible order $>$ on $\cB$ and let $\AP(n)$ denote the
admissible paths of order $n$ with respect to $\tip(\cG)$, where $\cG$ is
the reduced \grb basis of $I$ with respect to $>$.  Let $
\cdots \to L^2\to L^1\to L^0\to (\Lambda_{mon})_0\to 0$ be a minimal projective 
$\Lambda_{mon}$-resolution
of $(\Lambda_{mon})_0$ and $\cdots \to Q^2\to Q^1\to Q^0\to\Lambda_0\to 0$ be the
AGS $\Lambda$-resolution of $\Lambda_0$.  Then 
\[L^n\cong \bigoplus_{p\in \AP(n)}\Lambda_{mon} o(p).\]
and
\[Q^n\cong \bigoplus_{p\in \AP(n)}\Lambda o(p).\]
\end{prop}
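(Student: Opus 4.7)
The plan is to deduce both isomorphisms from Proposition \ref{apn} by a careful bookkeeping of Gr\"obner bases and the AGS construction, exploiting the fact that for monomial ideals the AGS resolution is already minimal.

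For the statement about $Q^n$, I would apply Proposition \ref{apn} directly to $\Lambda = K\Gamma/I$. That proposition identifies $\{\tip(f^n_i)\}_{i\in T_n}$ with $\AP(n)$, where the $f^n_i$ are the left uniform elements produced in the construction of the AGS resolution. Since each $f^n_i$ is left uniform, we have $o(f^n_i) = o(\tip(f^n_i))$, and the description of the AGS resolution recalled after Proposition \ref{Q3} (and spelled out in Theorem \ref{thm:fnresol}) gives $Q^n = \bigoplus_{i\in T_n} \Lambda o(f^n_i)$. Reindexing by $p = \tip(f^n_i) \in \AP(n)$ yields $Q^n \cong \bigoplus_{p\in\AP(n)} \Lambda o(p)$.

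For the statement about $L^n$, the first step is to identify the reduced Gr\"obner basis of $I_{mon}$ with respect to $>$. By definition $I_{mon} = \langle\tip(\cG)\rangle$, and since the tip of a path is the path itself, $\tip(\cG)$ is automatically a Gr\"obner basis of $I_{mon}$. To see that it is the \emph{reduced} Gr\"obner basis, one checks that no element of $\tip(\cG)$ has another as a proper subpath: if $\tip(g')$ were a proper subpath of $\tip(g)$ for distinct $g,g'\in\cG$, the path $\tip(g)$ occurring in $g$ itself would contain the tip of some other element of $\cG$ as a subpath, contradicting the reducedness of $\cG$. Consequently, the reduced Gr\"obner basis of $I_{mon}$ consists of paths of length $\ge 2$, so by the observation recorded in Section \ref{ags sec}, the AGS resolution of $(\Lambda_{mon})_0$ is \emph{minimal}. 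It is therefore a legitimate choice for $L^*$, and applying Proposition \ref{apn} to $\Lambda_{mon}$ (with the same $\AP(n)$, since the reduced Gr\"obner basis has tip set $\tip(\cG)$) gives the desired isomorphism $L^n \cong \bigoplus_{p\in\AP(n)}\Lambda_{mon} o(p)$.

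The only step requiring real care is verifying the reducedness of $\tip(\cG)$ as a Gr\"obner basis of $I_{mon}$, because this is what forces the sets $\AP(n)$ to coincide on both sides; everything else is a direct reindexing via Proposition \ref{apn} together with the minimality criterion for the AGS resolution in the monomial case. Note that the two resulting resolutions agree summand by summand in both algebras precisely because the combinatorial data $\AP(n)$ depends only on $\tip(\cG)$ and not separately on $\Lambda$ or $\Lambda_{mon}$.
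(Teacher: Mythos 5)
Your argument is correct and follows essentially the same route the paper intends: the paper's proof is just the citation of \cite{ghz} together with Proposition \ref{apn}, and you have filled in exactly the needed bookkeeping — that $\tip(\cG)$ is the reduced Gr\"obner basis of $I_{mon}$, that the AGS resolution of $(\Lambda_{mon})_0$ is therefore minimal and so isomorphic to the given $L^*$, and that $o(f^n_i)=o(\tip(f^n_i))$ by left uniformity. The verification that no element of $\tip(\cG)$ is a proper subpath of another, which you single out as the delicate point, is indeed the right thing to check and your deduction of it from the reducedness of $\cG$ is valid.
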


\section{$d$-Koszul algebras}\label{d koz}
Fix the following notation for the remainder of this section.  We let $K$ denote a field,
$\Gamma$ a quiver, $K\Gamma$ the path algebra, $I$ a homogeneous ideal in $K\Gamma$ contained
in $J^2=\langle \text{arrows in }\Gamma\rangle^2$,
$\Lambda=K\Gamma/I$, which is given the induced length grading, $>$ an admissible ideal,
and $\cG=\{g^2_i\}_{i\in\cI}$ is the reduced \grb basis for $I$ with respect to
$>$, where $\cI$ is an index set.
Let $\cdots \to P^{2}_{\Lambda}\to P^1_{\Lambda}\to 
P^0_{\Lambda}\to \Lambda_0\to 0$ be a minimal graded
projective $\Lambda$-resolution. 
Recall that if $F\colon \mathbb N\to \mathbb N$ and, 
for each $n\ge 0$, $P^n$ can be
generated in degree $F(n)$, we say that $\Lambda$ is {\it $F$-determined}. If, for each
$n\ge 0$, $P^n$ can be generated in degees $\le F(n)$, we say that $\Lambda$ is {\it weakly
$F$-determined}.  If
$\delta\colon\mathbb N\to \mathbb N$ is
defined by
 \[\delta(n)=\lbrace \begin{array}{l}
 \frac{n}{2}d\mbox{ if }n\mbox{ is even}\\ \\ \frac{n-1}{2}d +1\mbox{ if }n\mbox{ is odd}
 \end{array}, \]
and $\Lambda$ is $\delta$-determined, we say that $\Lambda$ is {\it $d$-Koszul}. Note that
we use the term `$d$-Koszul' in this case, since the Ext-algebra, $\bigoplus_{n\ge 0}
\Ext_{\Lambda}^n(\Lambda_0,\Lambda_0)$, is finitely generated (in degrees $0$,$1$, and $2$)
\cite{gmmz}.  Also note that, since $\delta(2)=d$, the ideal $I$ is homogeneous and can
be generated by homogeneous elements of degree $d$.

In \cite{ag}, there are a number of results that have the form, if $\Lambda_{mon}$ has
some property, then so does $\Lambda$.  The next result is of this nature, and is useful
in both this section and the next, where we study a special class weakly $\delta$-determined algebras. 

\begin{prop}\label{f-det}
Let $\Lambda=K\Gamma/I$ be as above.
Suppose that 
$F\colon \mathbb N\to \mathbb N$ is a set function such the $\Lambda_{mon}$
is weakly $F$-determined.  Then $\Lambda$ is weakly $F$-determined. Furthermore, 
if $F$ is
not strictly increasing, then $\Lambda_0$ has finite projective dimension both as
a $\Lambda$-module, and as a $\Lambda_{mon}$-module. In particular, if $F(s+1)>F(s)$, for
$0\le s\le m$, and $F(m+1)\le F(m)$, then the projective dimensions
of $\Lambda_0\le m$. 

Moreover, if $\Lambda_{mon}$
is $F$-determined, then $\Lambda$ is $F$-determined and $\Lambda_{mon}$
is weakly $F$-determined and the AGS $\Lambda$-resolution of $\Lambda_0$ is minimal.
 \end{prop}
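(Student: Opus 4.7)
My approach is to translate all three relevant resolutions into a common combinatorial object---the set $\AP(n)$ of admissible paths with respect to $\tip(\cG)$---and then to read off generator degrees directly, using Propositions~\ref{genl-overlp}, \ref{gr-hp-z}, and \ref{apn} as the dictionary. Since $\Lambda_{mon}$ is a monomial algebra, its reduced \grb basis consists of paths, hence its AGS resolution is already minimal, and Proposition~\ref{gr-hp-z} identifies the minimal graded projective resolution of $(\Lambda_{mon})_0$ with
\[L^n\cong\bigoplus_{p\in\AP(n)}\Lambda_{mon}\,o(p)[-\ell(p)].\]
Weak $F$-determination of $\Lambda_{mon}$ is therefore equivalent to the condition $\ell(p)\le F(n)$ for every $p\in\AP(n)$ and every $n\ge 0$, while full $F$-determination is equivalent to the equality $\ell(p)=F(n)$.

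For the minimal $\Lambda$-resolution $P^\bullet$ of $\Lambda_0$, Proposition~\ref{genl-overlp} combined with Proposition~\ref{apn} realizes $P^n$ as a direct summand of $Q^n\cong\bigoplus_{p\in\AP(n)}\Lambda\,o(p)[-\ell(p)]$, so the generator degrees of $P^n$ form a subset of $\{\ell(p):p\in\AP(n)\}$. Under weak $F$-determination of $\Lambda_{mon}$, every such degree is $\le F(n)$, giving weak $F$-determination of $\Lambda$; and when $\Lambda_{mon}$ is fully $F$-determined, every generator of $L^n$, and hence of $P^n$ whenever $P^n\ne 0$, lives in the single degree $F(n)$, so $\Lambda$ is $F$-determined as well.

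For the projective-dimension assertion, the recursion in clause (A1) of the definition of admissible paths---any $a_n\in\AP(n)$ has the form $r\,a_{n-1}$ with $a_{n-1}\in\AP(n-1)$ and $\ell(r)\ge 1$---shows that any element of $\AP(m+1)$ generates a chain $a_0,a_1,\dots,a_{m+1}$ with $\ell(a_0)=0$ and $\ell(a_{s+1})>\ell(a_s)$, so $\ell(a_s)\ge s$. Combining this with the weak $F$-determination bound $\ell(a_s)\le F(s)$ and the hypothesis that $F$ is strictly increasing up to step $m$ but $F(m+1)\le F(m)$, an inductive index count along the chain yields a contradiction, forcing $\AP(m+1)=\emptyset$. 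Hence $L^{m+1}=0$ and, since $P^{m+1}$ is a summand of $Q^{m+1}$ indexed by a subset of $\AP(m+1)$, also $P^{m+1}=0$. For minimality of the AGS resolution of $\Lambda_0$ in the $F$-determined case, the same recursion gives $F(n)>F(n-1)$ whenever $\AP(n)\ne\emptyset$: the AGS differential then maps each degree-$F(n)$ generator of $Q^n$ into a degree-$F(n)$ element of $Q^{n-1}$, which lies in $(J/I)Q^{n-1}$ because $Q^{n-1}$ is generated in the strictly smaller degree $F(n-1)$, which is the defining minimality condition.

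I expect the main obstacle to be the index-count step in the projective-dimension argument: since the lengths $\ell(a_s)$ may be strictly less than $F(s)$ at intermediate stages, the contradiction cannot come from comparing $\ell(a_{m+1})$ with $F(m+1)$ alone but must exploit the full strict-increase chain on $F$ simultaneously with the recursive shape of $\AP(n)$, and some care is needed to track the normalization $F(0)\ge\ell(a_0)=0$ through the induction.
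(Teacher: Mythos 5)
Your main line of argument---identifying the generator degrees of $P^n$, $Q^n$ and $L^n$ through $\AP(n)$ via Propositions \ref{genl-overlp}, \ref{gr-hp-z} and \ref{apn}, and reading off the transfer of (weak) $F$-determination from $\Lambda_{mon}$ to $\Lambda$---is exactly the paper's proof, merely rephrased with $\AP(n)$ in place of $\{\tip(f^n_i)\}_{i\in T_n}$ (these coincide by Proposition \ref{apn}). The same goes for your minimality argument in the $F$-determined case: there every $p\in\AP(n)$ has $\ell(p)=F(n)$, the recursion $a_n=ra_{n-1}$ with $\ell(r)\ge 1$ forces $F(n)>F(n-1)$ whenever $\AP(n)\ne\emptyset$, and the image of $Q^n$ therefore lands in $(J/I)Q^{n-1}$. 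Those parts are correct and essentially identical to the paper.

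The obstacle you flag in your last paragraph is, however, a genuine gap, and the ``inductive index count'' you appeal to does not exist. From a chain $a_0,a_1,\dots,a_{m+1}$ with $a_s\in\AP(s)$ you only obtain $s\le\ell(a_s)\le F(s)$, and these inequalities are perfectly compatible with $F$ increasing strictly through step $m$ and then dropping: take $\Gamma$ a single loop $x$ and $I=\langle x^2\rangle$, so that $\AP(n)=\{x^n\}$ and $\ell(a_n)=n$ for every $n$; with $F(s)=2s$ for $s\le m$, $F(m+1)=2m$, and $F(n)\ge n$ thereafter, $\Lambda_{mon}$ is weakly $F$-determined and $F(m+1)\le F(m)$, yet $\AP(m+1)\ne\emptyset$ and $\Lambda_0$ has infinite projective dimension. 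So no counting argument can force $\AP(m+1)=\emptyset$ from weak $F$-determination alone. The step the paper actually invokes here is ``$L^n\ne 0$ implies $F(n)\ge F(n-1)+1$,'' read off from the minimality of $L^\bullet$; that inference is only valid when the generators of $L^{n-1}$ sit in degree exactly $F(n-1)$, i.e.\ in the $F$-determined situation of the ``Moreover'' clause. You should therefore either establish the finite-projective-dimension assertion only under the hypothesis that $\Lambda_{mon}$ is $F$-determined---where your own recursion gives $F(n)=F(n-1)+\ell(r)>F(n-1)$ whenever $\AP(n)\ne\emptyset$, so $F(m+1)\le F(m)$ forces $\AP(m+1)=\emptyset$ and hence $L^{m+1}=Q^{m+1}=P^{m+1}=0$---or add an explicit hypothesis guaranteeing that degree $F(n)$ is actually attained by a generator of $L^n$.
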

 
\begin{proof} First assume that $\Lambda_{mon}$ is either weakly $F$-determined or
$F$-determined.
Let $\cdots \to L^2\to L^1\to L^0\to (\Lambda_{mon})_0\to 0$ be the graded AGS 
$\Lambda_{mon}$-resolution of $(\Lambda_{mon})_0$, which is a minimal projective
$\Lambda_{mon}$-resolution of $(\Lambda_{mon})_0$.  
Let $\cdots \to P^2\to P^1\to P^0\to \Lambda_0\to 0$ be a minimal graded 
$\Lambda$-resolution of $\Lambda_0$ and $\cdots \to Q^2\to Q^1\to Q^0\to \Lambda_0\to 0$ 
be the graded AGS $\Lambda$-resolution of $\Lambda_0$.  Let $\{f_i^n\}_{i\in T_n}$ be
the elements constructed in the AGS $\Lambda$-resolution of $\Lambda_0$.  Then,
by Proposition \ref{genl-overlp}, for each $n\ge 0$, there are subsets $T^*_n$
of $T_n$ such that $P^n\cong \oplus_{j\in T^*_n}\Lambda o(f^n_i)[-\ell(f^n_i)]$.
By Proposition \ref{gr-hp-z}, for each $n\ge 0$, $L^n\cong \oplus_{i\in T_n}
\Lambda_{mon}o(\tip(f^n_i))[-\ell(\tip(f^n_i))]$.  Since $\Lambda_{mon}$ is either
weakly $F$-determined
or $F$-determined, we see that, for each $n\ge 0$, either $\ell(\tip(f^n_i))\le F(n)$ or
$\ell(\tip(f^n_i))= F(n)$, for
all $i\in T_n$.  Since $\ell(f^n_i)=\ell(\tip(f^n_i))$, which is the degree of $f^n_i$, 
for every $i$ and $n$, 
we see that $\Lambda$ is weakly $F$-determined if $\Lambda_{mon}$ or $F$-determined
if $\Lambda_{mon}$ is.

Now assume further that $F(s+1)>F(s)$, for
$0\le s\le m$ and $F(m+1)\le F(m)$.  Since 
 $\cdots \to L^2\to L^1\to L^0\to (\Lambda_{mon})_0\to 0$ is minimal graded AGS 
$\Lambda_{mon}$-resolution of $(\Lambda_{mon})_0$, we know that the image
of $L^n$ in $L^{n-1}$ is contained in $J/I_{mon}L^{n-1}$, for $n\ge 1$.
In particular, if $L^n\ne (0)$, then $F(n)\ge F(n-1)+1$.  Thus, $L^{m+1}=(0)$.
Hence, $T_{m+1}=\emptyset$.  Thus, we also have $P^{m+1}=Q^{m+1}=(0)$ and we
see that the projective dimension of $\Lambda_0$, as a $\Lambda$-module, and the
projective dimension of $(\Lambda_{mon})_0$, as a $\Lambda_{mon}$-module, 
is less than or equal to $m$.

It remains to show that, assuming that $\Lambda_{mon}$ is $F$-determined,
then the AGS $\Lambda$-resolution of $\Lambda_0$ is, in fact,  minimal.
By the argument given above, the minimality of the AGS $\Lambda_{mon}$-resolution of
$(\Lambda_{mon})_0$, implies that, if $Q^n\ne (0)$,
$F(n)\ge F(n-1)+1$.  But then the image of $Q^n$ in $Q^{n-1}$ is contained
in $(J/I) Q^{n-1}$ and the result follows.
\end{proof}

We remark that if $\cdots  \to P^2\to P^1\to P^0\to \Lambda_0\to 0$ be a minimal graded 
$\Lambda$-resolution of $\Lambda_0$, then an easy induction argument shows that if $P^n\ne 0$, then the
generators of $P^n$ occur in degrees $n$ or higher.  
Thus, we always assume that the functions $F$ under
consideration have the property that $F(n)\ge n$, for all $n\ge 0$.  Moreover, we assume, without loss of
generality, that each $P^i$ can be generated degrees greater than or equal to $i$, since the only possible
exceptions occur if $P^i=0$.  These remarks and conventions, allow us to redefine weakly $F$-determined to
mean that, for each $n\ge 0$, $P^n$ can be generated in degrees bounded below by $n$, and bounded above by $F(n)$.

 We now apply Proposition \ref{f-det} to the $d$-Koszul case.

\begin{cor}\label{d-kosz-cor} Let $\Lambda = K\Gamma/I$, where $I$ is a 
homogeneous ideal in $K\Gamma$ contained in $J^2$. 
If, for some admissible order, the
associated monomial algebra $\Lambda_{mon}$ is $d$-Koszul, then $\Lambda$ is $d$-Koszul
algebra, and the reduced Gr\"obner basis of $I$ is concentrated in degree $d$. Moreover, 
the AGS $\Lambda$-resolution of $\Lambda_0$ is a minimal graded projective
$\Lambda$-resolution. 
\end{cor}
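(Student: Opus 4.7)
The plan is to invoke Proposition \ref{f-det} with the function $F=\delta$. By hypothesis $\Lambda_{mon}$ is $d$-Koszul, which by definition means that $\Lambda_{mon}$ is $\delta$-determined. The ``Moreover'' clause of Proposition \ref{f-det} then yields two of the three desired conclusions at once: $\Lambda$ is $\delta$-determined (hence $d$-Koszul), and the AGS $\Lambda$-resolution of $\Lambda_0$ is a minimal graded projective resolution. So the only substantive task remaining is to show that the reduced \grb basis $\cG=\{g^2_i\}_{i\in\cI}$ of $I$ is concentrated in degree $d$.

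For this last piece, I would combine Proposition \ref{gr-hp-z} with Proposition \ref{apn}. Let $\cdots\to L^2\to L^1\to L^0\to (\Lambda_{mon})_0\to 0$ be the minimal graded projective $\Lambda_{mon}$-resolution of $(\Lambda_{mon})_0$ (which coincides with the AGS resolution in the monomial case). Proposition \ref{gr-hp-z}, together with Proposition \ref{apn}, gives
\[
L^2 \cong \bigoplus_{p\in \AP(2)} \Lambda_{mon}\, o(p)[-\ell(p)],
\]
and by construction $\AP(2)=\tip(\cG)=\{\tip(g^2_i)\mid i\in\cI\}$. Since $\Lambda_{mon}$ is $d$-Koszul, $L^2$ is generated in degree $\delta(2)=d$, forcing $\ell(\tip(g^2_i))=d$ for every $i\in\cI$. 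Because each $g^2_i$ is homogeneous (the reduced \grb basis of a homogeneous ideal consists of homogeneous uniform elements, as noted earlier in the excerpt), we have $\ell(g^2_i)=\ell(\tip(g^2_i))=d$, so $\cG$ is concentrated in degree $d$, as required.

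I do not anticipate any serious obstacle: the corollary is essentially a repackaging of Proposition \ref{f-det} together with the known description of $L^2$ in terms of the tips of the \grb basis. The only mild point to be careful about is the last identification $\ell(g^2_i)=\ell(\tip(g^2_i))$, which uses homogeneity of the reduced \grb basis elements; otherwise the argument is entirely formal.
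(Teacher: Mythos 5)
Your proof is correct and follows the paper's own (unwritten) argument exactly: the paper simply applies Proposition \ref{f-det} with $F=\delta$ to get both the $d$-Koszulity of $\Lambda$ and the minimality of the AGS resolution, and the degree-$d$ concentration of $\cG$ comes from the fact that $\delta(2)=d$ forces the tips (hence, by homogeneity, the elements) of the reduced \grb basis to have length $d$. Your use of Propositions \ref{apn} and \ref{gr-hp-z} to identify the generators of $L^2$ with $\tip(\cG)$ is just a slightly more explicit version of that same observation.
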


Surprisingly, the following partial converse of Proposition \ref{f-det} is true.

\begin{prop}\label{conv-f-det}
Let $\Lambda=K\Gamma/I$ be as above.
Suppose that 
$F\colon \mathbb N\to \mathbb N$ is a set function such that $\Lambda$
is $F$-determined and assume that the AGS $\Lambda$-resolution of $\Lambda_0$ is 
minimal.  Then $\Lambda_{mon}$ is $F$-determined.
\end{prop}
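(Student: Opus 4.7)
The plan is to chase the $F$-determined condition from $\Lambda$ through the AGS data to the monomial algebra, using Propositions~\ref{apn} and~\ref{gr-hp-z} as the bridge. The only new hypothesis, beyond what was used in the forward direction (Proposition~\ref{f-det}), is that the AGS $\Lambda$-resolution is itself minimal, and this is precisely what turns an inequality on generating degrees into an equality.

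First I would fix notation: let $\{f^n_i\}_{i\in T_n}$ denote the left uniform elements constructed in the AGS $\Lambda$-resolution of $\Lambda_0$, so that the AGS resolution has the form
\[ Q^n \;=\; \bigoplus_{i\in T_n}\Lambda o(f^n_i)[-\ell(f^n_i)]. \]
Since the AGS resolution is assumed to be minimal, it is \emph{the} minimal graded projective $\Lambda$-resolution of $\Lambda_0$, so $P^n \cong Q^n$ as graded modules. The $F$-determined hypothesis on $\Lambda$ therefore forces $\ell(f^n_i)=F(n)$ for every $n \ge 0$ and every $i \in T_n$. Because $I$ is homogeneous, the reduced \grb basis $\cG$ consists of homogeneous uniform elements, and the inductive construction in \cite{gs} produces each $f^n_i$ as a homogeneous element; in particular $\ell(f^n_i)=\ell(\tip(f^n_i))$.

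Next I would invoke Proposition~\ref{apn}, which identifies the tip set $\{\tip(f^n_i)\}_{i\in T_n}$ with the set $\AP(n)$ of admissible paths of order $n$ with respect to $\tip(\cG)$. Combined with the previous step, this yields $\ell(p)=F(n)$ for every $p \in \AP(n)$. Now Proposition~\ref{gr-hp-z} gives the minimal graded projective $\Lambda_{mon}$-resolution
\[ \cdots \to L^2 \to L^1 \to L^0 \to (\Lambda_{mon})_0 \to 0, \]
with $L^n \cong \bigoplus_{p \in \AP(n)}\Lambda_{mon} o(p)[-\ell(p)]$ in the graded category. Since every $p \in \AP(n)$ has length exactly $F(n)$, each $L^n$ is generated precisely in degree $F(n)$, which is exactly what it means for $\Lambda_{mon}$ to be $F$-determined.

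There is no serious obstacle; the only point requiring care is the passage from ``$\Lambda$ is $F$-determined'' to ``every $f^n_i$ has degree $F(n)$,'' which rests on the assumed minimality of the AGS resolution. Without that minimality one would only know, via Proposition~\ref{genl-overlp}, that the minimal resolution is a direct summand of AGS, and the degrees of the AGS generators could in principle exceed $F(n)$; the minimality hypothesis rules this out and makes the tip-length argument go through cleanly.
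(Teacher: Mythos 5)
Your proof is correct and follows exactly the route the paper intends: the paper's own proof is the one-line remark that the result ``follows from Proposition~\ref{apn} and Proposition~\ref{gr-hp-z},'' and your argument is precisely the correct elaboration of that — minimality of the AGS resolution identifies it with the minimal one, forcing $\ell(f^n_i)=F(n)$, and the two cited propositions transfer this to the lengths of the admissible paths and hence to the generating degrees of the $L^n$.
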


\begin{proof} The proof follows from Proposition \ref{apn} and
Proposition \ref{gr-hp-z}.
\end{proof}

In general, homological properties of $\Lambda$ do not translate to
$\Lambda_{mon}$, but the above result and the next result are exceptions to this.
We now state the converse to Corollary \ref{d-kosz-cor} and note that we
do not assume that the AGS $\Lambda$-resolution of $\Lambda_0$ is minimal.  

\begin{thm}\label{monomial} Let $\Lambda = K\Gamma/I$ where $I$ is a homogeneous
ideal in $K\Gamma$.  Assume that $>$ is an admissible order on $\cB$ such that
the reduced \grb basis of $I$ is concentrated in degree $d$, where $d$ is a positive
integer greater than $1$.  Then 
 $\Lambda$ being $d$-Koszul implies that $\Lambda_{mon}$ is $d$-Koszul. 
 \end{thm}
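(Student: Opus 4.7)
The plan is to show, by induction on $n$, that every path in $\AP(n)$ (the admissible paths of order $n$ with respect to $\tip(\cG)$) has length exactly $\delta(n)$. Once this is established, Proposition \ref{gr-hp-z} yields $L^n \cong \bigoplus_{p\in\AP(n)}\Lambda_{mon}o(p)[-\delta(n)]$ in the minimal $\Lambda_{mon}$-resolution of $(\Lambda_{mon})_0$, so $\Lambda_{mon}$ is $\delta$-determined, i.e., $d$-Koszul. Equivalently, having shown that the AGS $\Lambda$-resolution of $\Lambda_0$ is minimal, one may invoke Proposition \ref{conv-f-det}.

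First I would set up the comparison between the two resolutions. By Proposition \ref{genl-overlp} the minimal $\Lambda$-resolution embeds in the AGS resolution as $P^n \cong \bigoplus_{p\in\AP(n)^*}\Lambda o(p)[-\ell(p)]$ for some subset $\AP(n)^* \subseteq \AP(n)$, and since $\Lambda$ is $d$-Koszul every $p\in\AP(n)^*$ satisfies $\ell(p)=\delta(n)$. Writing $Q^n = P^n \oplus C^n$, the complex $C^\bullet$ is acyclic and decomposes into cancelling pairs $\Lambda v[-j] \to \Lambda v[-j]$ supported in two consecutive homological positions; the task is to show $C^n = 0$, equivalently $\AP(n)^* = \AP(n)$, for every $n$. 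The base cases $n \leq 2$ are direct: $\AP(0),\AP(1),\AP(2)$ have paths of lengths $0,1,d$ respectively (using that $\cG$ is concentrated in degree $d$), and $C^2=0$ because the reduced \grb basis $\cG$ is automatically a minimal generating set of $I$ in degree $d$ (distinct, mutually non-divisible tips force $\dim_K I_d = |\cG| = |\AP(2)|$).

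For the inductive step at $n \geq 3$, I would split by the parity of $n$. For $n$ even, the overlap condition together with the induction hypothesis $\ell(a_{n-1})=\delta(n-1)$ and $\ell(a_{n-2})=\delta(n-2)$ forces $\ell(s)=1$ in $a_{n-1}=sa_{n-2}$ and then $\ell(r)=d-1$ in $a_n=ra_{n-1}$, giving $\ell(a_n)=\delta(n)$ automatically. For $n$ odd, the analogous calculation gives $\ell(s)=d-1$ and only $\ell(r)\in[1,d-1]$, hence a priori $\ell(a_n)\in[\delta(n),\delta(n)+d-2]$. To exclude the excess I would compute the corresponding range for $\AP(n+1)$ without prejudging $\AP(n)$: allowing $\ell(a_n)=\delta(n)+k$ with $k\in[0,d-2]$, the overlap analysis produces $\ell(a_{n+1})\in[\delta(n+1),\delta(n+1)+k]$, so $\AP(n+1)\subseteq[\delta(n+1),\delta(n+1)+d-2]$. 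Because $\delta(n+1)=\delta(n)+d-1$, the intervals $[\delta(n),\delta(n)+d-2]$ and $[\delta(n+1),\delta(n+1)+d-2]$ are disjoint.

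This degree disjointness means a summand of $C^n$ in internal degree $j$ cannot be matched, in a cancelling pair, with any summand of $C^{n+1}$ of the same internal degree. Combined with $C^{n-1}=0$ from the previous step (which blocks pairing in the other direction), no admissible pairing involving $C^n$ exists, forcing $C^n=0$ and $\AP(n)=\AP(n)^*$. This closes the induction. The main technical obstacle is the explicit overlap computation producing the ranges $[\delta(n),\delta(n)+d-2]$ for $\AP(n)$ and $[\delta(n+1),\delta(n+1)+d-2]$ for $\AP(n+1)$ in the odd case; once these ranges are in hand, the disjointness is a direct consequence of the alternating pattern $\delta(2k+1)-\delta(2k)=1$ versus $\delta(2k+2)-\delta(2k+1)=d-1$, and the rest of the argument is formal.
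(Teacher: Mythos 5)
Your argument is correct, but it follows a genuinely different route from the paper's. The paper first invokes the combinatorial characterization of monomial $d$-Koszul algebras from \cite{gmmz} (see Proposition \ref{d-max}): since $\tip(\cG)$ is a minimal generating set of $I_{mon}$ concentrated in degree $d$, it suffices to show that every maximal overlap in $\AP(3)$ has length $d+1$, so the entire proof lives in homological degree $3$. Assuming an overlap $p=t_2s=rt_1$ of length $d^*>d+1$, the paper forms the explicit element $F=rg_1-\sum_i\alpha_ix_ig'_i$ lying in $(\oplus_{g\in\cG}K\Gamma g)\cap(\oplus_{a\in\Gamma_1}Ia)$, expands it in the $f^3_i$'s (which are homogeneous of degree $d+1$ by $d$-Koszulity of $\Lambda$), and derives a contradiction from the fact that two distinct maximal overlaps of elements of $\tip(\cG)$ cannot be subpaths of one another. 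You instead run an induction over all homological degrees, using the Krull--Schmidt splitting $Q^\bullet\cong P^\bullet\oplus C^\bullet$ into the minimal resolution plus a contractible complement, and killing $C^\bullet$ by internal-degree disjointness of consecutive $\AP(n)$'s; your interval computations $[\delta(n),\delta(n)+d-2]$ versus $[\delta(n+1),\delta(n+1)+d-2]$ check out, and the one imprecision --- citing ``$C^{n-1}=0$ from the previous step'', whereas the even step by itself only yields exact generating degrees, not vanishing of $C^{n-1}$ --- is harmless, since pairing with $C^{n-1}$ is already blocked by the same degree disjointness. Your approach proves more (minimality of the AGS $\Lambda$-resolution and exact generating degrees in every homological degree, essentially re-deriving Proposition \ref{conv-f-det} in this setting) at the cost of a longer induction; had you combined your degree-$3$ step with Proposition \ref{d-max}, the induction would collapse to the single case $n=3$, which is what the paper achieves by its more explicit syzygy computation.
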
 

\begin{proof}
Let $\cG$ be the reduced \grb basis of $I$ with respect to $>$, $T_2$ be the
set of tips of $\cG$, and 
$T_3$ be the set of maximal overlaps of $T_2$ with respect to $\cG$.  
Since the paths in $T_2$ are
a set minimal generators of $I_{mon}$ and every path in $T_2$ is of length $d$,
by \cite{gmmz}, we need only show that every element of $T_3$ is of length
$d+1$.

Let $g_1,g_2\in \cG$ and $t_i=\tip(g_i)$, for $i=1,2$.  Assume that $t_1$ maximally
overlaps $t_2$ with respect to $\cG$ with $p=t_2s=rt_1$ the maximal overlap, where $r,s\in \cB$.
Assume that $\ell(p)>d+1$ and let $\ell(p)=d^*$. Note that $d^*<2d$. 
We show that this assumption
leads to a contradiction.  Clearly,
$rg_1-g_2s$ is a homogeneous element of degree $d^*$.  Since $\cG$ is a \grb basis
of $I$, there exist elements nonnegative integers, $A$ and $B$, and 
elements, $\alpha_1,\dots, \alpha_A,\beta_1,\dots,\beta_B\in K$, 
$x_1,\dots, x_A, y_1,\dots, y_B, z_1,\dots, z_B\in \cB$, with $\ell(z_j)\ge 1$,\
for $j=1,\dots,B$,
and $g^{'}_1,\dots, g^{'}_A,
g^{''}_1,\dots, g^{''}_B\in \cG$,
such that
\[(*)\quad \quad rg_1-g_2s=\sum_{i=1}^A\alpha_ix_ig^{'}_i +\sum_{j=1}^B\beta_jy_jg^{''}_jz_j,\]
where $\tip(rg_1-g_2s)<r\tip(g_1)$. Since $\cG$ is a reduced \grb basis, 
the right hand side of (*) is unique.

Since the elements of $\cG$ are all homogeneous of degree $d$, we may assume
that $\ell(x_i)=d^*-d$, for $i=1,\dots, A$ and that
$\ell(y_j)+\ell(z_j)=d^*-d$, for $j=1,\dots,B$. 
Now let 
\[F =r g_1 - \sum_{i=1}^A\alpha_i  x_ig^{'}_i =  g_2s + \sum_{j=1}^B 
\beta_jy_j g^{''}_j z_j.\] 
We see that  $\tip(F)= rt_1$,  which is a path of length $d^*$.

Let $\{f^n_i\}_{i\in T_n}$ be the given by the AGS $\Lambda$-resolution
as in \cite{gs}.  As shown in \cite{gsz}, we may obtain sets $V_n$
with $V_n\subset T_n$ so that $\{f^n_i\}_{i\in V_n}$
correspond to a minimal graded projective $\Lambda$-resolution of $\Lambda_0$.
As we stated in Section \ref{ags sec}, we have, for each $n\ge 2$, 
\[(**)\quad\quad (\oplus_{i\in V_{n-1}}K\Gamma f^{n-1}_i)
\cap (\oplus_{i\in V_{n-2}}I{f^{n-2}_i}) =(\oplus_{i\in V_n}K\Gamma f^n_i)
\oplus(\oplus_{i\in W_n}R{f^n_i}'),\]
where each ${f^n_i}'$ is in $\oplus_{i\in W_{n-1}}I f^{n-1}_i$, and the $W_n$ 
are given in \cite{gs} and \cite{gsz}.  Now, for $i=0,1$, $V_i=T_i$, since
$\Gamma_i$ is the set of $f^i_j$'s
in both the minimal and the
AGS $\Lambda$-resolutions for $\Lambda_0$.  Our assumption that the reduced
\grb basis of $I$ consists of homogeneous elements in one degree $d$, implies
that $V_2=T_2$, since, in this case, $\cG$ is the set of $f^2_j$'s for $\Lambda_0$
in both the minimal and the
AGS $\Lambda$-resolutions for $\Lambda_0$.
Applying (**), we see that
\[(\oplus_{g\in \cG}K\Gamma g)
\cap (\oplus_{a\in \Gamma_1}Ia) =(\oplus_{i\in V_3}K\Gamma f^3_i)
\oplus(\oplus_{i\in W_3}R{f^3_i}'),\]
From the definition of $F$, we see that $F\in (\oplus_{g\in \cG}K\Gamma g)
\cap (\oplus_{a\in \Gamma_1}Ia)$.  Thus,
\[F=\sum_{i\in V_3}h_i f^3_i +\sum_{i\in W_3}h^{'}_i{f^3_i}',\]
for some $h_i,h^{'}_j\in K\Gamma$.
But, since each ${f^3_i}'\in \oplus_{g\in\cG}I g$ and $I$ is generated by
$\cG$, we conclude that the each ${f^3_i}'$ is homogeneous of length at least $2d$.
Noting that $2d>d^*$, we conclude that each $h^{'}_j=0$ and
\[F=\sum_{i\in V_3}h_i f^3_i.\]
Our assumption that $\Lambda$ is $d$-Koszul, implies 
that each $f^3$ is homogeneous of length $d+1$.
Thus each $h_i$ is homogeneous of length $d^*-d-1\ge 1$.
But then $\tip(F)=\tip(h_i)\tip(f^3_i)$, for some $i\in V_3$.
Now $\tip(F)$ is the maximal overlap
of tips of $\cG$ of length $d^*$ and 
$\tip(f^3_i)$ is a maximal overlap of tips of $\cG$ of
length $d+1$. This is a contradiction since distinct maximal overlaps
of elements of $\tip(\cG)$ cannot be subwords of one another.
This completes the proof. 
\end{proof}

A consequence of the results of this section is that, if the reduced \grb
basis of an ideal $I$  consists of elements, all homogeneous of one degree, then
there is a finite check to determine whether or not $K\Gamma/I$ is $d$-Koszul.

\begin{prop}\label{d-max}
Let $\Lambda = K\Gamma/I$ and $>$ an admissible order such that the reduced Gr\"obner
basis of $I$ is concentrated in degree $d$. Then  $\Lambda$ is $d$-Koszul if and only if 
the set of maximal overlaps of elements of $\tip(\cG)$ with respect
to $\cG$ are all of length $d+1$.
\end{prop}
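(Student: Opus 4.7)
The plan is to reduce the statement to the monomial case and then invoke results already established in the section, together with the known characterization of $d$-Koszul monomial algebras from \cite{gmmz}. Recall that the proof of Theorem \ref{monomial} already uses the fact (from \cite{gmmz}) that a monomial algebra whose defining ideal is minimally generated by paths of a single length $d$ is $d$-Koszul if and only if every maximal overlap of those generators (with respect to themselves) has length exactly $d+1$. This equivalence is the key external input.

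For the forward implication, I would assume $\Lambda$ is $d$-Koszul. Since by hypothesis $\cG$ is concentrated in degree $d$, Theorem \ref{monomial} applies and yields that $\Lambda_{mon} = K\Gamma/\langle \tip(\cG)\rangle$ is $d$-Koszul. Because $\tip(\cG)$ is a minimal set of paths generating $I_{mon}$, all of length $d$, the $d$-Koszul characterization for monomial algebras from \cite{gmmz} forces every maximal overlap of elements of $\tip(\cG)$ with respect to $\cG$ to be of length $d+1$.

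For the converse, assume all maximal overlaps of elements of $\tip(\cG)$ with respect to $\cG$ have length $d+1$. Again using the \cite{gmmz} characterization in the monomial setting, we conclude that $\Lambda_{mon}$ is $d$-Koszul. Corollary \ref{d-kosz-cor} then transfers the property back to $\Lambda$, yielding that $\Lambda$ is $d$-Koszul.

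Since each direction is essentially an application of a previous result in the section combined with the standard monomial criterion from \cite{gmmz}, there is no serious technical obstacle. The only thing to be careful about is making sure the monomial criterion is being applied to the correct generating set, namely the tips of the reduced \grb basis, which by reducedness form a minimal set of path generators of $I_{mon}$ in degree $d$; this justifies the use of \cite{gmmz} in both directions.
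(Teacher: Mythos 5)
Your proposal is correct and follows essentially the same route as the paper: the converse direction (maximal overlaps of length $d+1$ $\Rightarrow$ $\Lambda_{mon}$ is $d$-Koszul by the criterion of \cite{gmmz} $\Rightarrow$ $\Lambda$ is $d$-Koszul by Corollary~\ref{d-kosz-cor}) is word-for-word the paper's argument. The only cosmetic difference is in the forward direction, where the paper directly cites the fact established inside the proof of Theorem~\ref{monomial} (that $d$-Koszulity forces all maximal overlaps to have length $d+1$), while you instead invoke the statement of Theorem~\ref{monomial} to get that $\Lambda_{mon}$ is $d$-Koszul and then apply the \cite{gmmz} monomial criterion to recover the overlap condition; both paths go through the same mathematics.
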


\begin{proof} Let $\cG$ be the reduced \grb basis of $I$ with respect
to $>$. If $\Lambda$ is $d$-Koszul, the proof of the above theorem shows
that every maximal overlap of tips of $\cG$ is a path of length $d+1$.
On the other hand, if the set of maximal overlaps of elements of 
$\tip(\cG)$ are all of length $d+1$, then since $I_{mon}$ has $\tip(\cG)$ as
its minimal generating set, by  \cite[Theorem 10.2]{gmmz}, $\Lambda_{mon}$
is $d$-Koszul.  Then $\Lambda$ is $d$-Koszul by Corollary \ref{d-kosz-cor}.
\end{proof}

The following result summarizes the main ideas of this section.

\begin{thm}\label{summ}Let $\Lambda = K\Gamma/I$ and $>$ an admissible order such that the reduced Gr\"obner
basis of $I$ is concentrated in degree $d$, with $d\ge 2$. Then the following statements are equivalent:
\begin{enumerate}
\item $\Lambda$ is a $d$-Koszul algebra.
\item $\Lambda_{mon}$ is a $d$-Koszul algebra.
\item If $\cdots \to P^2\to P^1\to P^0\to \Lambda_0\to 0$ is a minimal graded $\Lambda$-projective
resolution of $\Lambda_0$, then $P^3$ is generated in degree $d+1$.
\item If $\cdots \to P^2\to P^1\to P^0\to (\Lambda_{mon})_0\to 0$ is a minimal graded 
$\Lambda_{mon}$-projective
resolution of $(\Lambda_{mon})_0$, then $P^3$ is generated in degree $d+1$.
\item If $\cG$ is the reduced \grb basis of $I$ with respect to $>$, the 
every maximal overlaps of two elements of $\{\tip(\cG)\}$ with respect
to $\cG$, is of length $d+1$.

\end{enumerate}

\end{thm}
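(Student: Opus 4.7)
The plan is to prove the equivalences as a short cycle of implications, using Corollary~\ref{d-kosz-cor}, Theorem~\ref{monomial}, Proposition~\ref{Q3}, and Proposition~\ref{d-max}, with the only genuinely new work being the implication (3)~$\Rightarrow$~(5). First, (1)~$\Leftrightarrow$~(2) is obtained by combining Corollary~\ref{d-kosz-cor} (which gives (2)~$\Rightarrow$~(1)) with Theorem~\ref{monomial} (which gives (1)~$\Rightarrow$~(2)), and (1)~$\Leftrightarrow$~(5) is exactly Proposition~\ref{d-max}.

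Next I would establish (2)~$\Leftrightarrow$~(4). Since $I_{mon}$ is a monomial ideal, its reduced \grb basis with respect to $>$ is the set $\tip(\cG)$ of paths of length $d\ge 2$. As noted in Section~\ref{ags sec}, whenever the \grb basis consists only of paths of length at least $2$ the AGS resolution is already minimal, so the AGS $\Lambda_{mon}$-resolution of $(\Lambda_{mon})_0$ is itself minimal. Proposition~\ref{Q3} then identifies its third term as $\bigoplus_{t\in T^3}\Lambda_{mon}o(t)[-\ell(t)]$, where $T^3$ is the set of maximal overlaps of $\tip(\cG)$ with itself with respect to $\tip(\cG)$. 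Hence (4), stating that this module is generated in degree $d+1$, is literally (5).

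The implication (1)~$\Rightarrow$~(3) is immediate from $\delta(3)=d+1$, so it remains to close the cycle with (3)~$\Rightarrow$~(5). The plan is to reprise the proof of Theorem~\ref{monomial}. Assume for contradiction that there is a maximal overlap $p=t_2s=rt_1$ of length $d^*>d+1$, with $t_i=\tip(g_i)$ and $g_i\in\cG$. Exactly as in that proof, I would reduce $rg_1-g_2s$ modulo $\cG$ to obtain a homogeneous element $F$ of degree $d^*$ with $\tip(F)=rt_1$, show that $F$ lies in $\bigl(\bigoplus_{g\in\cG}K\Gamma g\bigr)\cap\bigl(\bigoplus_{a\in\Gamma_1}Ia\bigr)$, and invoke the decomposition used there to write
\[F=\sum_{i\in V_3}h_if^3_i+\sum_{i\in W_3}h'_i{f^3_i}',\]
where each ${f^3_i}'$ has degree $\ge 2d>d^*$, forcing $h'_i=0$. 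At this point the hypothesis (3) enters in exactly the place where the original proof uses $d$-Koszulity: the $\{f^3_i\}_{i\in V_3}$ correspond to the generators of $P^3$ in the minimal $\Lambda$-resolution, so (3) gives $\ell(f^3_i)=d+1$ for every $i\in V_3$. The remainder of the argument is unchanged: each nonzero $h_i$ has length $d^*-d-1\ge 1$, so $\tip(F)$ is a maximal overlap that contains the maximal overlap $\tip(f^3_i)$ as a proper subpath, contradicting the fact that distinct maximal overlaps of $\tip(\cG)$ cannot be subwords of one another.

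The main obstacle is purely expository: one must observe that Theorem~\ref{monomial}'s proof uses the full $d$-Koszul hypothesis only to bound $\ell(f^3_i)$, so the same argument goes through under the weaker assumption (3). No new combinatorial or homological machinery is needed.
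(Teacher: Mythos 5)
Your proof is correct and follows the route the paper intends (Theorem \ref{summ} is stated there as a summary with no written proof): (1)$\Leftrightarrow$(2) from Corollary \ref{d-kosz-cor} and Theorem \ref{monomial}, (1)$\Leftrightarrow$(5) from Proposition \ref{d-max}, (4)$\Leftrightarrow$(5) from minimality of the AGS resolution for monomial algebras together with Proposition \ref{Q3}, and (1)$\Rightarrow$(3) from $\delta(3)=d+1$. Your key observation for (3)$\Rightarrow$(5) --- that the proof of Theorem \ref{monomial} invokes $d$-Koszulity only to force $\ell(f^3_i)=d+1$ for $i\in V_3$, which is exactly what hypothesis (3) supplies --- is the right way to close the cycle and is consistent with how the paper's own arguments are deployed.
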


\section{ $2$-$d$-determined monomial algebras are $2$-$d$-Koszul}\label{2-d det}
Let $\Lambda=K\Gamma/I$, where $I$ is a homogeneous ideal.
We keep the convention that $\delta\colon\mathbb N\to \mathbb N$ is
defined by
 \[\delta(n)=\lbrace \begin{array}{l}
 \frac{n}{2}d\mbox{ if }n\mbox{ is even}\\ \\ \frac{n-1}{2}d +1\mbox{ if }n\mbox{ is odd}
 \end{array}. \]
We also let $\cdots \to P^2\to P^1\to P^0\to \Lambda_0\to 0$ be
a minimal graded
projective $\Lambda$-resolution of $\Lambda_0$.  We say that $\Lambda$
is {\it $2$-$d$-determined} if $\Lambda$ is weakly $\delta$-determined; that is, 
for each $n\ge 0$,  $P^n$ can be generated by
elements of degree at least $n$ and not greater than $\delta(n)$.  

In keeping with
the philosophy that the use of the word `Koszul' should imply that Ext-algebra is
finitely generated, we say that a $2$-$d$-determined algebra is a {\it $2$-$d$-Koszul
algebra} if its Ext-algebra, $\bigoplus\Ext_{\Lambda}^{n\ge 0}(\Lambda_0,\Lambda_0)$ is
finitely generated.  We prove later in this section that if $\Lambda$ is 
$2$-$d$-determined monomial algebra, then $\Lambda$ is a $2$-$d$-Koszul algebra;
in particular, we show that the Ext-algebra of $\Lambda$ can be generated in degrees
$0$, $1$, and $2$.

For the remainder of this section, we restrict 
our attention to monomial algebras such that the minimal generating
set of monomial relations occur in exactly two degrees, $2$ and $d$, where $d$ is
an integer greater than $2$. We fix the following notation for the remainder
of this section. Let $d$ be an integer greater than $2$, 
$K$ is a field, $\Gamma$ is a quiver, 
$I$ is a monomial ideal generated
by paths of length $2$ and paths of length $d$ and $\Lambda=K\Gamma/I$.
Since $I$ is generated by monomials, there is a unique minimal set of generating
paths, $\cG$, such that $\cG$ is the reduced \grb basis of $I$ with respect to
any admissible order on $\cB$.
Let $\cG_2$ denote the set
of paths of length $2$ in $I$.  Let $\cG_d$ denote the set of paths of
length $d$ in $\cG$. Note that $\cG$, and hence $\cG_2$ and $\cG_d$ are 
independent of the choice of $>$ and that our assumption
that $I$ is a monomial ideal generated in degrees $2$ and $d$ implies
that $\cG_2\cup \cG_d = \cG$.

Our first result gives necessary and 
sufficient conditions for the monomial algebra 
$\Lambda$ to be $2$-$d$-determined.  Before giving the result, we recall Theorem 10.2
from \cite{gmmz}.

\begin{prop}\label{overlap-d-koz} Let $\Lambda=K\Gamma/I$, where $I$ is
a monomial ideal generated by a set, $\rho$, of paths of length $d$ with $d\ge 2$.  
Then $\Lambda$
is $d$-Koszul algebra if and only if, for each pair of paths
$p,q\in \rho$, if $pr=sq$ with $1\le\ell(r)<d$ then every subpath of $pr$ of length
$d$ is in $\rho$.
\end{prop}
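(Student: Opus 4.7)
The plan is to reduce the statement to Proposition \ref{d-max}. Since $I$ is a monomial ideal generated by the set $\rho$ of paths of length $d$, the set $\rho$ is itself the reduced \grb basis of $I$ with respect to any admissible order on $\cB$, and it is concentrated in degree $d$. By Proposition \ref{d-max}, $\Lambda$ is $d$-Koszul if and only if every maximal overlap of elements of $\rho$ with respect to $\rho$ has length $d+1$. It therefore suffices to show this last condition is equivalent to the length-$d$ subpath condition in the proposition.

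For ($\Leftarrow$), assume the subpath condition. Let $w$ be a maximal overlap of some $p,q\in\rho$, so $w=pr=sq$ with $1\le k:=\ell(r)<d$. The length-$d$ subpaths of $w$ are $u_0=p, u_1,\dots,u_k=q$, where $u_i$ starts at position $i$. If $k\ge 2$, the hypothesis forces $u_1\in\rho$; but $u_1$ occupies positions $1$ through $d$ in $w$, and since $\ell(w)=d+k\ge d+2$, it is a proper subpath of $w$, contradicting maximality. Hence $k=1$ and $\ell(w)=d+1$.

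For ($\Rightarrow$), assume every maximal overlap has length $d+1$, and take a proper overlap $pr=sq$ with $p,q\in\rho$ and $k:=\ell(r)\in\{1,\dots,d-1\}$. I would show each $u_i\in\rho$ iteratively. Let $j$ be the smallest index in $\{1,\dots,k\}$ with $u_j\in\rho$; such $j$ exists because $u_k=q\in\rho$. Then the length-$(d+j)$ prefix of $pr$ is an overlap of $p$ with $u_j$, and by minimality of $j$ no element of $\rho$ is a proper subpath: every length-$d$ subpath starting at a position $i$ with $1\le i\le j-1$ equals $u_i\notin\rho$, and there are no other candidates because every element of $\rho$ has length exactly $d$. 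Hence this prefix is a maximal overlap, and by hypothesis has length $d+1$, forcing $j=1$ and $u_1\in\rho$. Repeating the argument with $u_1$ in the role of $p$, applied to the suffix of $pr$ from position $1$, yields $u_2\in\rho$; continuing gives $u_i\in\rho$ for all $0\le i\le k$.

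The step I expect to require the most care is verifying, in the forward direction, that the candidate prefix of length $d+j$ is genuinely a \emph{maximal} overlap and not merely an overlap. This hinges on $\rho$ being concentrated in a single degree $d$: any proper length-$d$ subpath of the prefix must coincide with some $u_i$ for $1\le i\le j-1$, and the minimality of $j$ rules these out. Once this point is settled, the iterative replacement of reference points $u_0, u_1, u_2, \dots$ proceeds mechanically, and Proposition \ref{d-max} closes the equivalence.
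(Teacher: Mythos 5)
The paper does not actually prove Proposition \ref{overlap-d-koz}; it is quoted verbatim as Theorem 10.2 of \cite{gmmz}, so there is no internal proof to compare against. Your combinatorial core --- the equivalence, for a set $\rho$ of paths concentrated in a single length $d$, between ``every length-$d$ subpath of every overlap lies in $\rho$'' and ``every maximal overlap with respect to $\rho$ has length $d+1$'' --- is correct and carefully argued in both directions; in particular your observation that the proper length-$d$ subpaths of the length-$(d+j)$ prefix are exactly $u_1,\dots,u_{j-1}$, so that minimality of $j$ certifies maximality of that overlap, is exactly the point that needs checking, and you check it.

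The genuine gap is that your reduction to Proposition \ref{d-max} is circular in the direction that carries all the homological content. The paper's proof of Proposition \ref{d-max} establishes the implication ``all maximal overlaps have length $d+1$ $\Rightarrow$ $\Lambda$ is $d$-Koszul'' precisely by invoking \cite[Theorem 10.2]{gmmz}, i.e.\ the statement you are trying to prove (and the forward implication of \ref{d-max} leans on the proof of Theorem \ref{monomial}, which opens with the same citation). So your ``if'' direction --- subpath condition implies $d$-Koszul --- rests on nothing: the step from degree control at homological degree $3$ to generation of $P^n$ in degree $\delta(n)$ for \emph{all} $n$ is never carried out. That step is the actual substance of the proposition. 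A self-contained argument would use Propositions \ref{apn} and \ref{gr-hp-z}: for a monomial algebra the AGS resolution is minimal and $P^n\cong\bigoplus_{a\in\AP(n)}\Lambda o(a)$, so one must show by induction on $n$ that the subpath condition forces $\ell(a_n)=\delta(n)$ for every $a_n\in\AP(n)$, writing $a_n=ra_{n-1}$, $a_{n-1}=sa_{n-2}$ and using the subpath condition to force $rs\in\rho$ whenever $\ell(r)\ge 2$. This is exactly the induction the paper performs (in the $2$-$d$ setting) in the first half of the proof of Theorem \ref{mono-2d}; your combinatorial lemma would slot into that induction, but as written the proposal delegates the induction to a result whose proof presupposes the proposition. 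Your ``only if'' direction, by contrast, is non-circular, since the forward half of Proposition \ref{d-max} is independently established via Theorem \ref{monomial}.
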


 \begin{thm}\label{mono-2d}Keeping the notations above, 
 $\Lambda$ is $2$-$d$-determined if and only if the algebra $\Delta$= $K\Gamma/<\cG_d> $ 
 is a $d$-Koszul algebra. 
 \end{thm}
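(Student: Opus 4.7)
The plan has two parts, both starting from the observation that $\Lambda$ and $\Delta$ are monomial algebras, so (by the remark preceding Proposition~\ref{Q3}) their AGS resolutions of the semisimple quotient are minimal graded projective resolutions. Applying Proposition~\ref{apn} then identifies the generators of $P^n_\Lambda$ and $P^n_\Delta$ with the admissible paths $\AP_\Lambda(n)$ (with respect to $\cG=\cG_2\cup\cG_d$) and $\AP_\Delta(n)$ (with respect to $\cG_d$), each generator occurring in internal degree equal to the length of the path. Hence $\Lambda$ is $2$-$d$-determined exactly when $\ell(a)\le\delta(n)$ for every $a\in\AP_\Lambda(n)$, and, by Theorem~\ref{summ}, $\Delta$ is $d$-Koszul exactly when every maximal overlap of two elements of $\cG_d$ with respect to $\cG_d$ has length $d+1$.

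For the direction $(\Rightarrow)$, assume $\Lambda$ is $2$-$d$-determined, and let $p$ be any maximal overlap of $t,q\in\cG_d$ with respect to $\cG_d$. Writing $p=tr=sq$ with $1\le\ell(r)<d$ gives $\ell(p)=d+\ell(r)<2d$, so every position of $p$ lies within the prefix $t$ or the suffix $q$; in particular, every length-$2$ subpath of $p$ lies within $t$ or within $q$. Since $t,q\in\cG_d$ are minimal generators of $I$, neither contains a $\cG_2$-subpath, and hence $p$ has no proper $\cG_2$-subpath. Combined with the absence of a proper $\cG_d$-subpath this makes $p$ a maximal overlap with respect to $\cG$, so $p\in\AP_\Lambda(3)$. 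The hypothesis forces $\ell(p)\le\delta(3)=d+1$, and since a proper overlap always satisfies $\ell(p)\ge d+1$, we conclude $\ell(p)=d+1$.

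For the direction $(\Leftarrow)$, assume $\Delta$ is $d$-Koszul. I would proceed by induction on $n$ to prove $\ell(a_n)\le\delta(n)$ for every $a_n\in\AP_\Lambda(n)$. For $a_n$ with admissible sequence $(p_{n-1},\ldots,p_1)$, writing $a_i=r_ia_{i-1}$ recursively for $2\le i\le n$ gives $\ell(a_n)=1+\sum_{i=2}^n\ell(r_i)$. The plan is to establish a pairwise bound on consecutive increments,
\[
\ell(r_i)+\ell(r_{i-1})\le \begin{cases} 2 & \text{if } p_{i-1},p_{i-2}\in\cG_2,\\ d & \text{otherwise,} \end{cases}
\]
and then to sum over appropriately chosen pairs to obtain $\sum_{i=2}^{n}\ell(r_i)\le\delta(n)-1$. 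The main obstacle lies in the case $p_{i-1},p_{i-2}\in\cG_d$: the prefix of $a_n$ containing both pieces is an overlap of two elements of $\cG_d$ of length $\ell(r_i)+d$, and because $\Delta$ is $d$-Koszul (Proposition~\ref{overlap-d-koz}) every length-$d$ subpath of this overlap lies in $\cG_d$. Condition~(A3) applied to the initial chunk $p_{i-1}s'=r_ir_{i-1}$ then rules out any such length-$d$ subpath sitting strictly inside, restricting $\ell(r_ir_{i-1})$ to at most $d+1$, and the minimality implicit in the AGS construction of admissible paths—that the next $\cG$-piece is chosen to start at the leftmost admissible position—sharpens this to the required $\ell(r_i)+\ell(r_{i-1})=d$. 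The remaining cases reduce directly: if $p_i\in\cG_2$ then $\ell(r_{i+1})<\ell(p_i)=2$ forces $\ell(r_{i+1})=1$, and no $\cG_d$-relation contains a $\cG_2$-subpath, so the interaction between $\cG_2$ and $\cG_d$ pieces is controlled purely by length constraints.
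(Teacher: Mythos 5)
Your argument is correct in substance, and the two directions sit differently relative to the paper's proof. For the implication ``$\Lambda$ $2$-$d$-determined $\Rightarrow$ $\Delta$ $d$-Koszul'' you take a genuinely shorter route: you verify the criterion of Proposition~\ref{d-max} (all maximal overlaps of $\cG_d$ with respect to $\cG_d$ have length $d+1$) in one step, by observing that such an overlap $p$ has length $d+\ell(r)<2d$, so every length-$2$ subpath of $p$ lies inside one of the two $\cG_d$-factors and hence, by reducedness of $\cG$, cannot lie in $\cG_2$; thus $p$ is already a maximal overlap with respect to all of $\cG$, lands in $\AP_\Lambda(3)$, and is squeezed between $d+1$ and $\delta(3)=d+1$. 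The paper instead verifies the criterion of Proposition~\ref{overlap-d-koz} by induction on the length of the overlap, using non-maximality of overlaps longer than $d+1$ to manufacture the intermediate length-$d$ subpaths one at a time. Both are legitimate characterizations of $d$-Koszulity for the monomial algebra $\Delta$, and your version avoids the induction. For the converse you follow essentially the paper's strategy (induction over admissible paths), merely repackaging the paper's recursion $\ell(a_n)=d+\ell(a_{n-2})$ as a bound $\ell(r_i)+\ell(r_{i-1})\le d$ on consecutive increments; the bookkeeping $\sum\ell(r_i)\le\delta(n)-1$ does close up in both parities once one treats $r_2$ separately when $n$ is even.

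The one point that deserves to be made airtight is exactly the one you flag: in the case where $p_{i-1},p_{i-2}\in\cG_d$, condition (A3) as literally stated only forbids elements of $\cG$ occurring as \emph{proper} subpaths of $r_ir_{i-1}=p_{i-1}s'$, and the length-$d$ relation starting at the second arrow (which exists by Proposition~\ref{overlap-d-koz}) fails to be a proper subpath precisely when $\ell(s')=1$; so (A3) alone yields only $\ell(r_ir_{i-1})\le d+1$, which is not enough ($d+1$ per pair overshoots $\delta(n)$). The sharpening to $\ell(s')=0$ really does require the stronger minimality built into the AGS construction --- namely that no element of $\tip(\cG)$ may begin at position $\ge 2$ of $r_ir_{i-1}$ and reach into $r_{i-1}$, suffixes included, since otherwise a shorter left factor would already land in the syzygy. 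You invoke this correctly but do not prove it; to be fair, the paper makes the identical leap at the same spot (``By Proposition~\ref{overlap-d-koz} and the maximality of the overlap with respect to $\cG$, we have that $a_2=rs$''), so this is a shared gap in exposition rather than an error in your plan. If you write this up, isolate that minimality property of $\AP(n)$ as a lemma and prove it from the construction of the $f^n_i$ in \cite{gs}, rather than from (A3) as stated.
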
 
 
 \begin{proof}  For $n\ge 0$, let $\AP_{\Delta}(n)$ be the admissible sets for $\rho=\cG_d$,
 defined in Section \ref{ags sec} and $\AP_{\Lambda}(n)$ be the admissible
 sets for $\rho=\cG$.  
 
 Let $n\ge 0$ and assume that $\Delta$ is a $d$-Koszul monomial algebra.
We need to show that if $a_n\in\AP_{\Lambda}(n)$,
 then $n\le \ell(a_n)\le \delta(n)$.  By definition of the $\AP(n)$'s, the inequalities
 hold for $n=0,1,2$.  Assume by induction, that $n\ge 3$ and the inequalities hold for
 $n-2$ and $n-1$.
There 
are unique elements 
\[a_{n-1}\in \AP(n-1), a_{n-2}\in\AP(n-2), r,s\in \cB,\] such
that $a_n=ra_{n-1}$ and $a_{n-1}=sa_{n-2}$.  Furthermore, there is some $a_2\in\AP(2)$
such that $a_2$ maximally overlaps $rs$ with respect to $\cG$ and $a_n=a_2ta_{n-2}$, for some path
$t$.  If $\ell(a_2)=2$, then $\ell(a_n)=\ell(a_{n-1}) +1$ and the result follows
from induction.  If $\ell(a_2)=d$ and $\ell(s)=1$, then again the result follows.
Finally if $\ell(a_2)=d$ and $\ell(s)>1$, then $\ell(s)\le d-1$ and it follows
that $s$ is a prefix of an element $a_2'$ of $\AP(2)$ of length $d$. 
Hence $a_2$ overlaps $a'_2$.  By Proposition \ref{overlap-d-koz} and the maximality
of the overlap with respect to $\cG$, we have that
$a_2=rs$.  Thus $\ell(a_n)=d+\ell(a_{n-2})$. Our assumption
implies that 
$\ell(a_{n-2})\le \delta(n-2)=((n-2)d/2) +1$ if $n$ is even and
$\ell(a_{n-2})\le \delta(n-2)=((n-3)d/2)$ if $n$ is odd, and the result follows.

Now we assume that $\Lambda$ is $2$-$d$-determined.  By Proposition \ref{overlap-d-koz}, it
suffices to show that if $a_2, a'_2\in \cG_d$ and $a_2$ overlaps $a'_2$, then
every subpath of length $d$ is in $\cG_d$.  Suppose that $a_2r=sa'_2$ with $\ell(r)\ge 1$.
We proceed by induction on the length of $r$.
If $\ell(r)=1$, then we are done.  Suppose that $\ell(r)>1$ and that 
$a_2r=\alpha_{\ell(a_2r)}\alpha_{\ell(a_2r)-1}\cdots\alpha_{2}\alpha_1$, with the 
$\alpha_j$'s arrows. It suffices to show that  $a^*_2=\alpha_{d+1}
\alpha_d\cdots \alpha_{2}\in \cG_d$, since,
if so, $a_2$ overlaps $a^*_2$ with $a_2r^*=s^*a^*_2$ and $\ell(r^*)=\ell(r)-1$.
Since $\ell(r)>1$,  $\ell(a_2r)>\delta(3)=d+1$.  Hence $a_2r\notin \AP(3)$ and we conclude
that the overlap of $a_2$ with $a'_2$ is not maximal with respect to $\cG$.  Thus there is
$\hat{a}_2\in \cG$ that maximally overlaps $a'_2$ with respect to $\cG$.  If $\ell(\hat{a}_2)=2$,
then $\hat{a}_2$ is a subpath of $a_2$, contradicting that $\cG$ is a reduced \grb basis.
Thus, $\hat{a}_2$ has length $d$ and, since $\delta(3)=d+1$, we see that $\hat{a}_2=\alpha_{d+1}\cdots \alpha_2$,
as desired.
\end{proof}

Since $\cG_d$ being the \grb basis of $d$-Koszul algebra is equivalent to
the length of every element in $\AP(3)$ (with respect to $\cG_d$) having length
exactly $d+1$, we have the following consequence of Theorem \ref{mono-2d}.

\begin{cor}\label{mono-cor} Let
$\Lambda = K\Gamma/I$ be as in Theorem \ref{mono-2d}. The following statements are
equivalent:

\begin{enumerate} 
\item The algebra $\Lambda$ is $2$-$d$-determined.
\item $P_3$ can be generated in
degrees bounded above by $d+1$. 
\end{enumerate} \end{cor}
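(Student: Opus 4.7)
The plan is to deduce the corollary directly from Theorem~\ref{mono-2d}, together with the observation that, for a monomial algebra, the AGS resolution is already a minimal graded projective resolution of $\Lambda_0$. Since $I$ is a monomial ideal contained in $J^2$, the reduced \grb basis $\cG$ consists of paths of length at least $2$, so by the remark at the beginning of Section~\ref{ags sec} the AGS $\Lambda$-resolution of $\Lambda_0$ is minimal. Combining this with Proposition~\ref{apn}, one obtains $P^3 \cong \bigoplus_{p\in \AP_{\Lambda}(3)} \Lambda\, o(p)[-\ell(p)]$, where $\AP_{\Lambda}(3)$ is formed with respect to $\cG = \cG_2 \cup \cG_d$. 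Hence condition (2) is equivalent to the assertion that every $p \in \AP_{\Lambda}(3)$ has length at most $d+1$.

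The implication (1)$\Rightarrow$(2) is immediate from $\delta(3) = d+1$. For the converse (2)$\Rightarrow$(1), the strategy is to show that (2) forces $\Delta = K\Gamma/\langle \cG_d\rangle$ to be $d$-Koszul, after which Theorem~\ref{mono-2d} will give that $\Lambda$ is $2$-$d$-determined. The crucial observation, obtained by inspecting the ``$\Lambda$ is $2$-$d$-determined implies $\Delta$ is $d$-Koszul'' half of the proof of Theorem~\ref{mono-2d}, is that the weakly $\delta$-determined hypothesis is used there only to conclude that paths of length exceeding $d+1$ cannot lie in $\AP_{\Lambda}(3)$. Under hypothesis (2) this same conclusion is available, so the induction on $\ell(r)$ in the relation $a_2 r = s a'_2$ from that proof carries through verbatim and, via Proposition~\ref{overlap-d-koz}, yields that $\Delta$ is $d$-Koszul.

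The main obstacle is essentially bookkeeping: one has to verify carefully that the induction step in the proof of Theorem~\ref{mono-2d} really relies only on the degree-$3$ bound extracted from the $2$-$d$-determined hypothesis, and not on any bound at higher homological degrees. Once this is confirmed, the corollary requires no new ideas and is, in effect, a repackaging of Theorem~\ref{mono-2d} saying that the $2$-$d$-determined condition for a monomial algebra of this shape is detected entirely by the third syzygy.
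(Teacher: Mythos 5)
Your proof is correct and follows essentially the same route as the paper: identify $P^3$ with $\bigoplus_{p\in\AP(3)}\Lambda\, o(p)[-\ell(p)]$ via minimality of the AGS resolution for a monomial algebra, and reduce both directions to Theorem \ref{mono-2d} together with Proposition \ref{overlap-d-koz}. In fact your handling of (2)$\Rightarrow$(1) --- the observation that the second half of the proof of Theorem \ref{mono-2d} uses the $2$-$d$-determined hypothesis only through the bound $\ell(p)\le d+1$ for $p\in\AP(3)$ --- is more explicit than the paper's very terse argument, which leaves that direction largely to the reader.
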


\begin{proof}
Let $a_2,a'_2\in \cG=\AP(3)$, where $\AP(n)$ are the admissible paths of order $n$ with
respect to $\cG$.  Suppose that $a_2$ maximally overlaps $s'_2$ with 
respect to $\cG$ and let $p\in \AP(3)$ be the
overlap.  If either $\ell(a_2)=2$ or $\ell(a'_2)=2$, then $\ell(p)\le d+1=\delta(3)$.
If both $a_2$ and $a'_2$ are of length $d$, then the previous theorem and the properties
of $d$-Koszul monomial algebras.
\end{proof}

We now turn our attention to the Ext-algebra of a monomial $2$-$d$-determined algebra.
Recall that the {\it Ext-algebra of $\Lambda$}, which we denote by $E(\Lambda)$, is the algebra
$\bigoplus_{n\ge 0}\Ext^n_{\Lambda}(\Lambda_0,\Lambda_0)$.
We view $E(\Lambda)$ as a positively $\mathbb Z$-graded algebra, where
$E(\Lambda)_n=\Ext^n_{\Lambda}(\Lambda_0,\Lambda_0)$.
The next result  shows that every $2$-$d$-determined monomial algebra is a $2$-$d$-Koszul
algebra.

\begin{thm}\label{2d-mono-koz} Let $\Lambda=K\Gamma/I$, where $I$ is a monomial ideal
generated by paths of $2$ and $d$ with $d\ge 3$.  Then $\Lambda$ is $2$-$d$-determined
if and only if $\Lambda$ is $2$-$d$-Koszul.  Moreover, $E(\Lambda)$ can be generated 
in degrees $0$, $1$, and $2$.
\end{thm}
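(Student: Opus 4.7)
The forward implication, that $2$-$d$-Koszul implies $2$-$d$-determined, is immediate from the definition. The substantive content is the reverse direction together with the structural claim; both follow once one shows that a $2$-$d$-determined monomial algebra $\Lambda$ has $E(\Lambda)$ generated in degrees $0$, $1$, and $2$, since such generation forces finite generation of $E(\Lambda)$ and hence the $2$-$d$-Koszul property. My plan is to describe $E(\Lambda)$ explicitly via the AGS framework and then prove the generation statement by induction.

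Since $\Lambda$ is monomial, the AGS resolution of $\Lambda_0$ is already minimal (see the remark in Section \ref{ags sec}), so Proposition \ref{gr-hp-z} supplies a canonical $K$-basis $\{[a]:a\in\AP(n)\}$ of $\Ext^n_\Lambda(\Lambda_0,\Lambda_0)$. A standard computation with the explicit chain maps furnished by the AGS construction then shows that the Yoneda product is given by path concatenation: for $u\in\AP(k)$ and $v\in\AP(\ell)$, one has $[u]\cdot[v]=[uv]$ when the concatenated path $uv$ lies in $\AP(k+\ell)$, and $[u]\cdot[v]=0$ otherwise. Consequently, proving that $E(\Lambda)$ is generated in degrees at most $2$ reduces to showing that for each $a\in\AP(n)$ with $n\ge 3$ there is a path-level factorization $a=uv$ with $u\in\AP(k)$, $v\in\AP(n-k)$, and $\min(k,n-k)\le 2$.

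I would carry out the induction on $n$ by splitting on the unique decomposition $a=r\,a_{n-1}$ (with $a_{n-1}\in\AP(n-1)$ and $\ell(r)\ge 1$) furnished by condition (A1). If $\ell(r)=1$ then $r\in\Gamma_1=\AP(1)$ and the desired factorization is $a=r\cdot a_{n-1}$, with $[a]=[r]\cdot[a_{n-1}]$ and the inductive hypothesis taking over on $a_{n-1}$. If instead $\ell(r)\ge 2$, condition (A2) produces a relation $a_2\in\rho$ with $\ell(r)<\ell(a_2)$, which forces $\ell(a_2)=d$ and $a_2\in\cG_d$; writing the path $a$ as the concatenation $a=a_2\cdot w$, we obtain the candidate factorization $[a]=[a_2]\cdot[w]$ with $a_2\in\AP(2)$, contingent upon $w\in\AP(n-2)$.

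The main obstacle is precisely this verification that $w\in\AP(n-2)$, and it is the only step where the $2$-$d$-determined hypothesis is actually used. Through Theorem \ref{mono-2d} the hypothesis tells us that $K\Gamma/\langle\cG_d\rangle$ is $d$-Koszul, and Proposition \ref{overlap-d-koz} then controls how two elements of $\cG_d$ can overlap: every length-$d$ subpath of their overlap must lie back in $\cG_d$. I would exploit this controlled overlap behavior to reconstruct the admissible sequence of $w$ from the admissible sequence $(p_{n-1},p_{n-2},\ldots,p_1)$ of $a$, stripping off the leading $p_{n-1}=a_2$ and showing that the residual sequence $(p_{n-2},\ldots,p_1)$ together with the remainder coming from (A2) continues to satisfy (A1)--(A3) for the path $w=s'a_{n-2}$. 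Once this combinatorial verification is in place, induction on $n$ closes and $E(\Lambda)$ is generated in degrees $0$, $1$, and $2$, yielding finite generation and hence the $2$-$d$-Koszul property of $\Lambda$.
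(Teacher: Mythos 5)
Your proposal is correct and follows essentially the same route as the paper: both reduce, via the Green--Zacharia description of the Yoneda product on $E(\Lambda)$ as concatenation of admissible paths, to showing that each $a_n\in\AP(n)$ with $n\ge 3$ factors as $a_1a_{n-1}$ or $a_2a_{n-2}$, splitting on $\ell(r)$ and invoking Theorem \ref{mono-2d} together with Proposition \ref{overlap-d-koz} in the length-$d$ case. The only difference is cosmetic: the paper carries out the combinatorial step you defer by showing directly that the intermediate path $t$ (your $s'$) must be a vertex, so that $w=a_{n-2}$ is already the canonical element of $\AP(n-2)$.
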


\begin{proof}
It suffices to show that if $\Lambda$ is a $2$-$d$-determined monomial algebra, then
$E(\Lambda)$ can be generated 
in degrees $0$, $1$, and $2$. Let $\cG$ be the reduced \grb basis of $I$ with respect
to some admissible order.   We let $\cG=\cG_2\cup \cG_d$, where $\cG_2$ are the elements
of $\cG$ of degree $2$ and $\cG_d$ are the elements of $\cG$ of degree $d$.
Let $\AP(n)$ be the admissible paths with respect to $\cG$. 
Suppose $n\ge 3$ and $a_n\in\AP(n)$.  There 
are unique elements 
\[a_{n-1}\in \AP(n-1), a_{n-2}\in\AP(n-2), r,s\in \cB,\] such
that $a_n=ra_{n-1}$ and $a_{n-1}=sa_{n-2}$.  Furthermore, there is some $a_2\in\AP(2)$
such that $a_2$ maximally overlaps $rs$ with respect to $\cG$ and $a_n=a_2ta_{n-2}$, for some path
$t$. We also have that there is some $a_2^*\in \AP(2)$ such that $a_{n-1}=a_2^*p$, for some
path $p$.

By \cite{gz}, it suffices to show that either $a_n=a_2a_{n-2}$, 
or $a_n=a_1a_{n-1}$, for some $a_1\in \AP(1)=\Gamma_1$.
If $\ell(a_n)=\ell(a_{n-1})+1$, then $a_n=a_1a_{n-1}$, for some $a_1\in \AP(1)=\Gamma_1$
and we are done.  Suppose that $\ell(a_n)\ge \ell(a_{n-1})+2$.  Then $a_2$ has length
$>2$ and we see that $\ell(a_2)=d$.

We must show
that $\ell(t)=0$; that is, we must show that $t$ is a vertex.
If $\ell(t)>0$, then $\ell(a_{n-1})>\ell(a_{n-2})+1$ and we conclude that
$\ell(a_2^*)=d$.  Hence, suppose that $\ell(a_2^*)=d$. 
We know that $a_2$ maximally overlaps $rs$ with respect to $\cG$.
We have that $a_2^*=ss'$ for some path $s'$.  Thus $a_2$ overlaps $a_2^*$.  By our
assumption that $\Lambda$ is $2$-$d$-determined, by Theorem \ref{mono-2d}, we
see that $K\Gamma/\langle\cG_d\rangle$ is $d$-Koszul.  Hence, by Proposition
\ref{overlap-d-koz}, every path of length $d$ in the overlap of $a_2$ with $a_2^*$
is in $\cG_d\subset \cG$.  In particular, this implies that $t$ must be a vertex.
This completes the proof.
\end{proof}

\section{$2$-$d$-determined algebras and \grb bases}\label{2d-grobner}

Let $d>2$, $I$ be a homogeneous ideal in $K\Gamma$, and $\Lambda=K\Gamma$.  In this section
we study conditions on $\Lambda$ that imply that 
$\Lambda$ is $2$-$d$-determined.  At this time, we do not know
if every $2$-$d$-determined algebra is a $2$-$d$-Koszul algebra.  Throughout this
section, we 
fix an admissible order $>$ on $\cB$.  Since, by definition, a $2$-$d$-determined algebra
is just a weakly $\delta$-determined algebra, the next result is an immediate consequence
of Proposition \ref{f-det}.

\begin{prop}\label{gen-2d} Keeping the above notations, if $\Lambda_{mon}$ is
$2$-$d$-Koszul then $\Lambda$ is $2$-$d$-determined.
\end{prop}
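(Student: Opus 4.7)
The plan is to observe that this proposition follows almost immediately from Proposition \ref{f-det}, applied to the specific function $F = \delta$. First I would unpack the chain of definitions: a $2$-$d$-Koszul algebra is by definition a $2$-$d$-determined algebra whose Ext-algebra is finitely generated, so the hypothesis that $\Lambda_{mon}$ is $2$-$d$-Koszul immediately yields that $\Lambda_{mon}$ is $2$-$d$-determined. In turn, by the definition given at the start of this section, $2$-$d$-determined means weakly $\delta$-determined for the function $\delta\colon\mathbb{N}\to\mathbb{N}$ recalled at the beginning of Section \ref{d koz}.

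Next, I would invoke Proposition \ref{f-det} with $F = \delta$. That proposition states that weak $F$-determination passes from $\Lambda_{mon}$ to $\Lambda$, so from the fact that $\Lambda_{mon}$ is weakly $\delta$-determined we conclude that $\Lambda$ is weakly $\delta$-determined as well. Rewriting this in the terminology of the present section gives precisely that $\Lambda$ is $2$-$d$-determined, which is the desired conclusion.

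There is essentially no obstacle here, since the real content has already been packaged into Proposition \ref{f-det}, whose proof compared the minimal graded projective resolution of $\Lambda_0$ with the AGS resolutions of $\Lambda_0$ and $(\Lambda_{mon})_0$ via Propositions \ref{genl-overlp} and \ref{gr-hp-z}. The only points worth verifying in passing are that $\delta(n) \geq n$ for all $n \geq 0$ (immediate from the explicit formula, using $d \geq 2$), so that $\delta$ meets the standing convention on admissible functions $F$ imposed just after Proposition \ref{f-det}, and that the finite-generation part of the ``Koszul'' hypothesis on $\Lambda_{mon}$ plays no role in this direction---only the degree bounds on the resolution transfer to $\Lambda$, not the finite generation of the Ext-algebra. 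Consequently, I would present the argument as a brief definition-chase concluded by a single invocation of Proposition \ref{f-det}.
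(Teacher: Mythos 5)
Your proof is correct and matches the paper exactly: the paper offers no separate argument for this proposition, stating only that since a $2$-$d$-determined algebra is by definition a weakly $\delta$-determined algebra, the result is an immediate consequence of Proposition \ref{f-det}. Your definition-chase, including the observation that the finite generation of the Ext-algebra of $\Lambda_{mon}$ is not used, is precisely the intended argument.
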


We now prove another result that gives sufficient conditions
for $\Lambda$ to be $2$-$d$-determined.

\begin{thm}\label{striking} 
Let $\Lambda = K\Gamma/I$, where $I$ is a homogeneous 
ideal in $K\Gamma$, and let $>$ be an admissible order on $\cB$.  Suppose that the reduced
Gr\"obner basis $\cG$  of $I$ with respect to $>$ satisfies $\cG =\cG_2 \cup \cG_d$
where $\cG_2$ consists of homogeneous elements of degree $2$ and
$\cG_d$ consists of homogeneous elements of degree $d$, where $d\ge 3$.
Then $\Lambda$ is $2$-$d$-determined
if $K\Gamma/\langle\tip( \cG_d)\rangle$ is a $d$-Koszul algebra.   
\end{thm}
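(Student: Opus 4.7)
The plan is to reduce this to previously-established facts about the monomial algebra $\Lambda_{mon}$. The key observation is that the associated monomial algebra is $\Lambda_{mon} = K\Gamma/\langle \tip(\cG_2) \cup \tip(\cG_d)\rangle$, which is a monomial algebra whose minimal generating set of paths lies in exactly two degrees, $2$ and $d$. The hypothesis is precisely a statement about the degree-$d$ part of the tips of $\cG$, which is the input needed by Theorem \ref{mono-2d}.

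First I would verify that $\tip(\cG_2) \cup \tip(\cG_d)$ is the minimal set of path generators of the monomial ideal $I_{mon}$, so that the setup of Section \ref{2-d det} applies to $\Lambda_{mon}$ with the same decomposition by degree. Because $\cG$ is a reduced \grb basis, no tip is a subpath of another, and all tips are homogeneous of the same degree as the corresponding element of $\cG$; hence the degree-$d$ piece of the minimal generating set of $I_{mon}$ is exactly $\tip(\cG_d)$.

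Next, invoke Theorem \ref{mono-2d} applied to the monomial algebra $\Lambda_{mon}$: since $K\Gamma/\langle \tip(\cG_d)\rangle$ is a $d$-Koszul algebra by assumption, $\Lambda_{mon}$ is $2$-$d$-determined. That is, $\Lambda_{mon}$ is weakly $\delta$-determined. Then apply Proposition \ref{f-det} with $F = \delta$: since $\Lambda_{mon}$ is weakly $\delta$-determined, $\Lambda$ is also weakly $\delta$-determined, which by definition means $\Lambda$ is $2$-$d$-determined. (Equivalently, one can observe via Theorem \ref{2d-mono-koz} that $\Lambda_{mon}$ is in fact $2$-$d$-Koszul and then quote Proposition \ref{gen-2d} directly.)

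There is essentially no obstacle in this argument beyond assembling the correct citations; the only thing to be careful about is that the decomposition $\cG = \cG_2 \cup \cG_d$ by degree transfers cleanly to a decomposition of the minimal path generators of $I_{mon}$ by degree, which is immediate from the definition of the reduced \grb basis and the fact that $\tip$ preserves degree on homogeneous elements.
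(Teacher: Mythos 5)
Your proposal is correct and follows essentially the same route as the paper: reduce to the associated monomial algebra $\Lambda_{mon}$, observe that its minimal path generators are $\tip(\cG_2)\cup\tip(\cG_d)$, conclude that $\Lambda_{mon}$ is $2$-$d$-determined from the hypothesis on $K\Gamma/\langle\tip(\cG_d)\rangle$, and transfer back to $\Lambda$ via Proposition \ref{f-det} (equivalently Theorem \ref{2d-mono-koz} plus Proposition \ref{gen-2d}). The only cosmetic difference is that the paper re-runs the admissible-path induction for $\Lambda_{mon}$ with $\rho=\tip(\cG)$ rather than quoting Theorem \ref{mono-2d} outright, but that is exactly the instance of Theorem \ref{mono-2d} you invoke, so no content is missing.
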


\begin{proof} We begin by showing that if $K\Gamma/\langle\tip(\cG_d)\rangle$ is
a $d$-Koszul algebra, then $\Lambda_{mon}$ is $2$-$d$-determined. By Theorem
\ref{mono-2d} and Proposition \ref{gen-2d},
this then shows that if $K\Gamma/\langle\tip(\cG_d)\rangle$ is
a $d$-Koszul algebra, then $\Lambda$ is $2$-$d$-determined.
Assume  $K\Gamma/\langle\tip(\cG_d)\rangle$ is
a $d$-Koszul algebra and,
for $n\ge 0$, let $\AP_{\Delta}(n)$ be the admissible sets for $\rho=\tip(\cG_d)$,
 defined in Section \ref{ags sec}. Let $\AP_{\Lambda_{mon}}(n)$ be the admissible
sets for $\rho=\tip(\cG)$.

We need to show that if $a_n\in\AP_{\Lambda_{mon}}(n)$,
 then $n\le \ell(a_n)\le \delta(n)$.  By definition of the $\AP_{\Lambda_{mon}}(n)$'s, 
the inequalities
hold for $n=0,1,2$.  Assume by induction, that $n\ge 3$ and the inequalities hold for
$n-2$ and $n-1$.
There 
are unique elements 
\[a_{n-1}\in \AP_{\Lambda_{mon}}(n-1), a_{n-2}\in\AP_{\Lambda_{mon}}(n-2), r,s\in \cB,\] 
such that $a_n=ra_{n-1}$ and $a_{n-1}=sa_{n-2}$.  
Furthermore, there is some $a_2\in\AP_{\Lambda_{mon}}(2)$
such that $a_2$ maximally overlaps $rs$ with respect to $\tip(\cG)$ 
and $a_n=a_2ta_{n-2}$, for some path
$t$.  If $\ell(a_2)=2$, then $\ell(a_n)=\ell(a_{n-1}) +1$ and the result follows
from induction.  If $\ell(a_2)=d$ and $\ell(s)=1$, then again the result follows.
Finally, if $\ell(a_2)=d$ and $\ell(s)>1$, then $\ell(s)\le d-1$ and it follows
that $s$ is a prefix of an element $a_2'$ of $\AP_{\Lambda_{mon}}(2)$ of length $d$. 
Hence $a_2$ overlaps $a'_2$.  By Proposition \ref{overlap-d-koz} (applied to 
the $d$-Koszul algebra 
$K\Gamma/\langle\tip(\cG_d)\rangle$)
and by the maximality
of the overlap with respect to $\tip(\cG)$, we have that
$a_2=rs$.  Thus $\ell(a_n)=d+\ell(a_{n-2})$. Our assumption
implies that 
$\ell(a_{n-2})\le \delta(n-2)=((n-2)d/2) +1$ if $n$ is even, and
$\ell(a_{n-2})\le \delta(n-2)=((n-3)d/2)$ if $n$ is odd, and we have shown that
$\Lambda_{mon}$ is $2$-$d$-determined.
\end{proof}

 Suppose, as in the theorem
above, that $d\ge 3$ and $\cG$
is the reduced \grb basis for a homogeneous ideal and that $\cG=\cG_2\cup\cG_d$, where
$\cG_2$ consists of quadratic elements and $\cG_d$ consists of homogeneous elements of 
degree $d$.  If $d>3$, then  $\cG_2$ is the reduced \grb basis of the
ideal it generates and hence, by \cite{gh}, $K\Gamma/\langle\cG_2\rangle$ is
a Koszul algebra.  On the other hand, if $d=3$, then $\cG_2$ need not be the reduced
\grb basis of the ideal it generates and it is not necessarily the
case that $K\Gamma/\langle\cG_2\rangle$ is
a Koszul algebra. 

The next result is a partial converse to the above theorem. 

\begin{prop}\label{striking-conv}Let $\Lambda = K\Gamma/I$, where $I$ is a homogeneous 
ideal in $K\Gamma$, and let $>$ be an admissible order on $\cB$.  Suppose that the reduced
Gr\"obner basis $\cG$  of $I$ with respect to $>$ satisfies $\cG =\cG_2 \cup \cG_d$
where $\cG_2$ consists of homogeneous elements of degree $2$ and
$\cG_d$ consists of homogeneous elements of degree $d$, where $d\ge 3$.
If $\Lambda$ is $2$-$d$-Koszul and the AGS $\Lambda$-resolution of $\Lambda_0$ is 
minimal, then $K\Gamma/\langle \tip(\cG_d)\rangle$ is a $d$-Koszul algebra.
\end{prop}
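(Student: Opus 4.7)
The plan is to reduce to the monomial setting and then invoke Theorem \ref{mono-2d}. The hypothesis that the AGS $\Lambda$-resolution of $\Lambda_0$ is minimal lets us read the $n$-th term of the minimal graded projective $\Lambda$-resolution directly from Proposition \ref{apn}: it is a direct sum of graded projectives $\Lambda o(p)[-\ell(p)]$ indexed by $p \in \AP(n)$, where $\AP(n)$ denotes the admissible paths of order $n$ with respect to $\tip(\cG)$. Since $\Lambda$ is $2$-$d$-Koszul, and hence $2$-$d$-determined (that is, weakly $\delta$-determined), we deduce the uniform bound $\ell(p) \le \delta(n)$ for every $p \in \AP(n)$ and every $n \ge 0$.

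Next, I would invoke Proposition \ref{gr-hp-z} to transfer this to $\Lambda_{mon}$: a minimal graded projective $\Lambda_{mon}$-resolution of $(\Lambda_{mon})_0$ has $n$-th term isomorphic to $\bigoplus_{p \in \AP(n)} \Lambda_{mon}\, o(p)[-\ell(p)]$, indexed by the \emph{same} admissible paths $\AP(n)$ and with generators in the same degrees $\ell(p)$. The bound $\ell(p)\le\delta(n)$ therefore carries over verbatim, so $\Lambda_{mon}$ is weakly $\delta$-determined, i.e., $2$-$d$-determined.

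Finally, I would apply Theorem \ref{mono-2d} to the monomial algebra $\Lambda_{mon} = K\Gamma/I_{mon}$. Because $\cG$ is the reduced Gr\"obner basis of $I$, no tip of an element of $\cG$ can be a subpath of the tip of a different element of $\cG$; hence $\tip(\cG) = \tip(\cG_2) \cup \tip(\cG_d)$ is the unique minimal generating set of paths of the monomial ideal $I_{mon}$. Its degree-$2$ part is $\tip(\cG_2)$ and its degree-$d$ part is exactly $\tip(\cG_d)$, so the role played by $\cG_d$ in Theorem \ref{mono-2d} is played here by $\tip(\cG_d)$. Theorem \ref{mono-2d} then yields that $K\Gamma/\langle\tip(\cG_d)\rangle$ is $d$-Koszul, which is the desired conclusion.

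The argument is essentially mechanical once the two bookkeeping statements above are in place; the only subtle point, and the one I would treat with care, is the identification in the final paragraph that $\tip(\cG_d)$ is genuinely the degree-$d$ part of a \emph{minimal} generating set of $I_{mon}$, which uses the defining property of a reduced Gr\"obner basis that rules out redundancies between tips of different degrees.
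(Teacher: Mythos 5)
Your proof is correct, but it takes a genuinely different route from the paper's. You transfer the weak $\delta$-bound from $\Lambda$ to $\Lambda_{mon}$ using the fact that, when the AGS $\Lambda$-resolution is minimal, the minimal resolutions of $\Lambda_0$ and of $(\Lambda_{mon})_0$ are both indexed by the same admissible paths $\AP(n)$ with generators in the same degrees (Propositions \ref{apn} and \ref{gr-hp-z}; this is the weak-determined analogue of Proposition \ref{conv-f-det}), and you then quote the forward direction of Theorem \ref{mono-2d} applied to $\Lambda_{mon}$, after the correct observation that reducedness of $\cG$ makes $\tip(\cG_2)\cup\tip(\cG_d)$ the minimal monomial generating set of $I_{mon}$. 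The paper instead argues directly: it takes a maximal overlap $a_3=a'_2p=qa_2$ of two elements of $\tip(\cG_d)$, forms $qg_2-g'_2p$ and rewrites it via the Gr\"obner division to produce an element $X$ with $\tip(X)=a_3$ that is one of the $f^3_i$'s of the (minimal) AGS resolution, so that $2$-$d$-determinedness forces $\ell(a_3)\le d+1$, and then concludes via the overlap criterion of Proposition \ref{d-max}. Your route is cleaner and reuses machinery already established (in effect only the $n=3$ degree bound is needed, but the transfer gives all $n$ for free); the paper's route is more self-contained and exhibits explicitly how the overlap of two degree-$d$ leading terms lifts to a third-syzygy generator of $\Lambda$, which is the mechanism hiding behind Theorem \ref{mono-2d}. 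Both arguments rest on the same two hypotheses in the same way (minimality of the AGS resolution to identify generator degrees, and $2$-$d$-determinedness to bound them), so I see no gap in your version.
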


\begin{proof}
We follow a line of reasoning similar to 
that found in the proof of Theorem \ref{monomial}.
If $\{f^n_i\}_{i\in T_n}$ are given by the AGS $\Lambda$-resolution, which, we are assuming,
is a minimal graded projective $\Lambda$-resolution of $\Lambda_0$.
As we stated in Section \ref{ags sec}, we have, for each $n\ge 2$, 
\[(\oplus_{i\in T_{n-1}}K\Gamma f^{n-1}_i)
\cap (\oplus_{i\in T_{n-2}}I{f^{n-2}_i}) =(\oplus_{i\in T_n}K\Gamma f^n_i)
\oplus(\oplus_{i\in W_n}R{f^n_i}'),\]
where each ${f^n_i}'$ is in $\oplus_{i\in W_{n-1}}I f^{n-1}_i$, and the $W_n$ 
are given in \cite{gs} and \cite{gsz}.  We are assuming that the reduced
\grb basis of $I$ consists of homogeneous elements in two degrees, $2$ and $d$.
Our assumption that $\Lambda$ is $2$-$d$-determined, implies that every
$f^i_3\in V_3$ is homogeneous of degree $\le d+1$.  In fact, from the construction
of the $f_3^i$'s, the homogeneous degrees of elements in $T_3$ are either $3$ or
$d+1$.

Suppose that $\Lambda$ is $2$-$d$-determined and the AGS $\Lambda$-resolution
of $\Lambda_0$ is minimal. Let
$\Delta=K\Gamma/\langle\tip(\cG_d)\rangle$. We wish to show that $\Delta$ is
a $d$-Koszul algebra.  For this, let $\AP_{\Delta}(n)$ denote the admissible
sequences for $\tip(\cG_d)$.  It suffices to show that if $a_3\in \AP_{\Delta}(3)$,
then $\ell(a_3)=d+1$ by Proposition \ref{d-max}.  Let $a_3\in \AP_{\Delta}$ and
suppose that $a_2,a'_2\in\AP_{\Delta}(2)$
are such that $a_3$ is the maximal overlap of $a'_2$ with $a_2$.  Let $g_2,g'_2\in\cG_d$ such
that $\tip(g_2)=a_2$ and $\tip(g'_2)=a'_2$.
If $p$ and $q$ are paths such that $a_3=a'_2p=qa_2$, then, since
$\cG$ is a reduced (homogeneous) \grb basis, there exist homogeneous elements
$r_h,s_h,t_h\in K\Gamma$, for $h\in\cG$ such that
\[
qg_2-g'_2p=
\sum_{h\in\cG}r_hh+\sum_{h\in\cG}s_hht_h,
\]
with each term occurring has the same length and each nonzero $s_h$ has length $\ge 1$.
Let $X=qg_2-\sum_{h\in\cG}=g'_2p+\sum_{h\in\cG}s_hht_h$.  Then we see that $\tip(X)=a_3$, 
$X$ is one of the $f^3_i$'s and hence of length $\le d+1$.  It follows that 
$\ell(a_3)\le d+1$ and we are done.

\end{proof}

We can now put together our results in the following theorem.

\begin{thm}\label{final-summary}
Let $\Lambda = K\Gamma/I$, where $I$ is a homogeneous 
ideal in $K\Gamma$, and let $>$ be an admissible order on $\cB$.  Suppose that the reduced
Gr\"obner basis $\cG$  of $I$ with respect to $>$ satisfies $\cG =\cG_2 \cup \cG_d$
where $\cG_2$ consists of homogeneous elements of degree $2$ and
$\cG_d$ consists of homogeneous elements of degree $d$, where $d\ge 3$.
Then the following are true.
\begin{enumerate}
\item If $K\Gamma/\langle \cG_d\rangle$ is $d$-Koszul then $\Lambda$ is a $2$-$d$-determined
algebra and $K\Gamma/\langle \tip(\cG)\rangle$ is $2$-$d$-Koszul.
\item If $K\Gamma/\langle \tip(\cG)\rangle$ is $2$-$d$-Koszul, then $\Lambda$ is
$2$-$d$-determined.
\item If $\cdots \to P^3\to P^2\to P^1\to P^0\to (K\Gamma/\langle \cG_d\rangle)_0$ is
a minimal graded projective $K\Gamma/\langle \cG_d\rangle$-resolution and
$P^3$ can be generated in degree $\le d+1$, then $\Lambda$ is a $2$-$d$-determined algebra.

\end{enumerate}
Assuming the AGS $\Lambda$-resolution of $\Lambda_0$ is minimal, the following
statements are equivalent.
\begin{enumerate}
\item[(4)]  The algebra $\Lambda$ is $2$-$d$-determined.
\item[(5)]  The algebra $\Lambda_{mon}$ is $2$-$d$-Koszul.
\item[(6)]  The algebra $K\Gamma/\langle \tip(\cG_d)\rangle$ is $d$-Koszul.
\end{enumerate}
\end{thm}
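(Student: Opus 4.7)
The plan is to reduce the statement to the monomial case and then transfer the conclusion back to $\Lambda$ using results already established in the paper. The key observation is that the associated monomial algebra
\[\Lambda_{mon}=K\Gamma/\langle \tip(\cG)\rangle = K\Gamma/\langle \tip(\cG_2)\cup\tip(\cG_d)\rangle\]
is a monomial algebra whose (unique) minimal set of monomial generators consists of the length-$2$ paths $\tip(\cG_2)$ and the length-$d$ paths $\tip(\cG_d)$. The reason is that $\cG$ is reduced, and in particular no element of $\tip(\cG)$ is a subpath of another; hence $\tip(\cG)$ itself is the minimal monomial generating set of the ideal $\langle\tip(\cG)\rangle$, and its splitting by length is precisely into the two pieces $\tip(\cG_2)$ and $\tip(\cG_d)$.

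Next I would apply Theorem \ref{mono-2d} to $\Lambda_{mon}$. That theorem, applied to the monomial algebra $\Lambda_{mon}$ with relations of degrees $2$ and $d$, says that $\Lambda_{mon}$ is $2$-$d$-determined if and only if $K\Gamma/\langle \tip(\cG_d)\rangle$ is $d$-Koszul. The latter holds by hypothesis, so we conclude that $\Lambda_{mon}$ is $2$-$d$-determined, i.e.\ weakly $\delta$-determined.

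Finally, I would invoke Proposition \ref{f-det} with $F=\delta$: since $\Lambda_{mon}$ is weakly $\delta$-determined, so is $\Lambda$, which is exactly the statement that $\Lambda$ is $2$-$d$-determined. This completes the chain $\text{($d$-Koszul of }K\Gamma/\langle\tip(\cG_d)\rangle) \Rightarrow \text{($2$-$d$-det of }\Lambda_{mon}) \Rightarrow \text{($2$-$d$-det of }\Lambda)$.

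The main obstacle I expect is the bookkeeping in the first step: one must carefully verify that the hypotheses of Theorem \ref{mono-2d} really apply to $\Lambda_{mon}$, with the degree-$d$ piece of its minimal generating set being exactly $\tip(\cG_d)$ (not some smaller set obtained after cancelling tips that happen to sit inside other tips). This uses the standard fact that in a reduced \grb basis no tip contains another tip as a subpath, so lengths are preserved when passing from $\cG$ to $\tip(\cG)$ and the $2$-$d$ partition is inherited cleanly. Once this is in place, the two cited results chain together with essentially no further calculation.
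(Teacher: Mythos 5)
Your argument is a correct and in fact slightly streamlined proof of \emph{one} implication in this theorem, namely (6) $\Rightarrow$ (4): you correctly observe that $\tip(\cG)$ is the minimal monomial generating set of $\langle\tip(\cG)\rangle$ (because $\cG$ is reduced, no tip is a subpath of another), so Theorem \ref{mono-2d} applies verbatim to $\Lambda_{mon}$ and gives that $\Lambda_{mon}$ is $2$-$d$-determined whenever $K\Gamma/\langle\tip(\cG_d)\rangle$ is $d$-Koszul, and then Proposition \ref{f-det} transfers this to $\Lambda$. The paper proves this same implication (as Theorem \ref{striking}) by re-running the admissible-path induction directly for $\rho=\tip(\cG)$ rather than citing Theorem \ref{mono-2d}; your shortcut is legitimate and buys a shorter argument at the cost of the bookkeeping you already flagged.

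However, the statement you were asked to prove has several further claims that your proposal does not touch, and at least one of them requires a genuinely different argument. First, part (1) starts from $K\Gamma/\langle\cG_d\rangle$ being $d$-Koszul, not its monomial shadow; you need Theorem \ref{summ} to pass from $d$-Koszulity of $K\Gamma/\langle\cG_d\rangle$ to $d$-Koszulity of $K\Gamma/\langle\tip(\cG_d)\rangle$ before your chain can start. Part (1) also asserts that $K\Gamma/\langle\tip(\cG)\rangle$ is $2$-$d$-\emph{Koszul}, i.e.\ that its Ext-algebra is finitely generated; this is the content of Theorem \ref{2d-mono-koz} and does not follow from being $2$-$d$-determined by definition alone. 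Part (3) needs Corollary \ref{mono-cor} to convert the degree bound on $P^3$ into $d$-Koszulity of $K\Gamma/\langle\cG_d\rangle$. Most seriously, the equivalence of (4), (5), (6) under minimality of the AGS resolution requires the converse implication (4) $\Rightarrow$ (5): that $\Lambda$ being $2$-$d$-determined forces $K\Gamma/\langle\tip(\cG_d)\rangle$ to be $d$-Koszul. This goes in the opposite direction from your transfer (from $\Lambda$ back to $\Lambda_{mon}$), and it is exactly where the minimality hypothesis on the AGS resolution is used; the paper proves it in Proposition \ref{striking-conv} by analyzing the elements $f^3_i$ and showing every maximal overlap of two elements of $\tip(\cG_d)$ has length at most $d+1$. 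Your proposal as written gives no route to this converse.
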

\begin{proof}
By Theorem \ref{summ} ((1) implies (2)),  we see that $K\Gamma/\langle \cG_d\rangle$ being 
$d$-Koszul implies that $K\Gamma/\langle \cG_d\rangle_{mon}$ is $d$-Koszul.  But
$K\Gamma/\langle \cG_d\rangle_{mon}=K\Gamma/\langle \tip(\cG_d)\rangle$ and 
part (1) follows from Theorem \ref{striking}. Part (2) follows from Theorems \ref{mono-2d}
and \ref{striking}.  Part (3) follows from Corollary \ref{mono-cor} and Theorem \ref{striking}.

We have seen that (5) implies (6) and that (6) implies (4).  That (4) implies (5), follows
from Proposition \ref{striking-conv}.\end{proof}

We end with the obvious questions:

\noindent
{\bf Questions}: Assume that $\Lambda=K\Gamma/I$, where $I$ is a ideal generated by
homogeneous elements of degrees $2$ and $d$.
\begin{enumerate}
\item[(i)] If $\Lambda$ is a $2$-$d$-determined algebra, then, is the Ext-algebra 
$E(\Lambda)=$
\linebreak
$\oplus_{n\ge 0}\Ext^n_{\Lambda}(\Lambda_0,\Lambda_0)$ finitely generated?  
\item[(ii)] If $\Lambda$ is a $2$-$d$-determined algebra and the Ext-algebra
$E(\Lambda)$ finitely generated,
is it generated in degrees $0$, $1$, and $2$ (assuming that the global dimension of $\Lambda$
is infinite). 
\item[(iii)] If $\Lambda$ is not of finite global dimension and $E(\Lambda)$ is generated
in degrees $0$, $1$, and $2$, then is $\Lambda$ $2$-$d$-determined?
\end{enumerate}

\end{document}